\crefname{hypothesis}{Hypothesis}{Hypotheses}
\title{Contact problems in porous media\thanks{Submitted to the editors \today.
}}
\author{L. Banz\thanks{Department of Mathematics, University of Salzburg, Salzburg, Austria 
  (\email{lothar.banz@plus.ac.at}).}
\and 
F. Bertrand\thanks{Faculty of Electrical Engineering, Mathematics and Computer Science, University of Twente, Enschede, Netherlands
  (\email{f.bertrand@utwente.nl}).}
}
\pgfplotsset{compat=newest}
\definecolor{darkbrown}{rgb}{0.57, 0.40, 0.13}
\newcommand{\R}{{\mathbb R}}
\renewcommand{\div}{\operatorname{div}}
\newcommand{\bV}{{\bf V}}
\newcommand{\bK}{{\bf K}}
\newcommand{\bG}{{\bf G}}
\newcommand{\bH}{{\bf H}}
\newcommand{\bI}{{\bf I}}
\newcommand{\bL}{{\bf L}}
\newcommand{\bu}{{\boldsymbol u}}
\newcommand{\bv}{{\boldsymbol v}}
\newcommand{\bw}{{\boldsymbol w}}
\newcommand{\bff}{{\boldsymbol f}}
\newcommand{\bfe}{{\boldsymbol f_e}}
\newcommand{\bl}{{\boldsymbol l}}
\newcommand{\bn}{{\boldsymbol n}}
\newcommand{\bt}{{\boldsymbol t}}
\newcommand{\Id}{\boldsymbol{I}}
\newcommand{\divu}{\operatorname{div} \bu}
\newcommand{\divv}{\operatorname{div} \bv}
\newcommand{\bsigma}{\boldsymbol \sigma}
\newcommand{\btheta}{\boldsymbol \theta}
\newcommand{\bkappa}{\boldsymbol \kappa}
\newcommand{\beps}{\boldsymbol{\varepsilon}}
\newcommand{\bepsu}{\beps(\bu)}
\newcommand*\norm[1]{\left|\hspace*{-0.035cm}\left|\hspace*{-0.035cm}\left|#1\right|\hspace*{-0.035cm}\right|\hspace*{-0.035cm}\right| }
\begin{document}

\maketitle

\begin{abstract}
The Biot problem of poroelasticity is extended by Signorini contact conditions. The resulting Biot contact problem is formulated and analyzed as a two field variational inequality problem of a perturbed saddle point structure. We present an a priori error analysis for a general as well as for a $hp$-FE discretization including convergence and guaranteed convergence rates for the latter. In particular, these rates are always optimal in the discretization parameters for the fluid pressure, and can be linear for the lowest order $h$-version. Moreover, we state a residual based a posteriori error estimator.
 Numerical results underline our theoretical findings and show that optimal, in particular exponential, convergence rates can be achieved by adaptive schemes for two dimensional problems. 
\end{abstract}

\begin{keywords}
  porous media, Biot, contact problem, a priori error estimate,  $hp$-FEM
\end{keywords}

\begin{AMS}
65N30
\end{AMS}

\section{Introduction}

Contact problems occur in a wide range of areas, including mechanical engineering or biomechanics. Because the colliding structures cannot penetrate each other, an inequality constraint of non-penetration is prescribed, leading to a mathematical description of the problem in form of a variational inequality. We refer to \cite{Duvaut1976,Glowinski1984, Hlavacek1988,Kikuchi1988Contact} and the references therein for an overview of the corresponding numerical analysis. 

In biomechanics however, the structure usually consists of porous material. Typical applications in biomedical engineering therefore includes the prediction of the stress between (porous) cartilage and a prosthesis as in \cite{guo2013biphasic}, between a porous bone and a cartilage as in \cite{Sahu201614} as well as the deformation of cartilage replacement material \cite{Yang2020}. To the best knowledge of the authors, the numerical analysis of the corresponding variational inequalities is restricted to elliptic bilinear forms and neglects the porosity of the material. 

For the description of a porous material, we refer to the seminal work of Biot \cite{Bio:41}. It describes the deformation of a fluid saturated elastic porous medium due to fluid flow, and the fluid flow subject to the deformation of the medium. This leads to a two-fields formulation, given by the conservation of mechanical momentum and the conservation of the fluid mass, expressed in the displacement field $\mathbf u$  and the fluid pressure $p$. It is shown in \cite{MurThoLou:96} that the
$H^1$-conforming Taylor–Hood finite element combination yields optimal a priori error estimates. Several three-fields formulations with various discretisations were proposed afterwards, to increase the robustness towards model parameters see e.g.~\cite{PhiWhe:08, OyaRui:16, LeeMarWin:17, Bertrand2020h, Bertrand_ExCo_2022}.
In the next section, we present our novel Biot contact problem and prove its well-posedness in Section~\ref{sec:well-posedness}. An abstract Galerkin approximation framework is introduced in Section~\ref{sec:abstractGalerkin}, leading to a highly potent $hp$-finite element discretization in Section~\ref{sec:hp_discretization} which also includes a convergence (rate) analysis. The theoretical results are then illustrated by a computational example
in Section~\ref{sec:numerics}.

\section{The Biot contact problem}
\label{sec:biot}
 The aim of this section is the derivation of equations describing the equilibrium position of a porous material with a fluid source $f_f$ subject to a loading force ${\bfe}$ and constrained to lie above a given rigid obstacle represented by a gap function $g$. To this purpose, the Biot equations are considered in a Lipschitz domain $\Omega \subset \R^d$ with $d=2,3$, i.e.~the problem consists of finding a vector-valued displacement field $\bu$ and a scalar-valued pressure field $p$ such that for the effective stress tensor $\btheta(\bu)$, the total stress tensor $\bsigma(\bu,p)$, and the fluid content $\phi(\bu,p)$ defined by
\begin{subequations}
\begin{align}
    \btheta(\bu)  &= 2 \tau \bepsu+\iota \divu \Id\\
    \bsigma(\bu,p)&= \btheta(\bu) - \alpha p \Id\\
    \phi(\bu,p)   &= \frac{\alpha^2}{\iota} p+\alpha \divu,
\end{align}
\end{subequations}
where $\bepsu = (\nabla \bu + \nabla^\top \bu)/2$ is the linearized strain tensor, $\iota>0$ and $\tau>0$ are the first and second Lamé-parameters (usually denoted by $\lambda$ and $\mu$ but these letters are needed elsewhere), $\alpha>0$ is the Biot–Willis constant and $\bkappa>0$ is a symmetric uniformly positive defined matrix describing the permeability, 
it holds
\begin{subequations}\label{biotsystem}
\begin{align}
 -\operatorname{div} \bsigma(\bu,p)  &=\bfe\\
\operatorname{div}\left(\bkappa \nabla p\right)-\phi(\bu,p)&=f_f  .
\end{align}
\end{subequations}
Since $\operatorname{div} \bsigma(\bu,p) = \operatorname{div} \btheta(\bu)-\alpha \nabla p$ the two-field system therefore reads
\begin{subequations} \label{biotsystem2F}
\begin{align}
-\div
\left(2\tau \beps(\bu)+\iota \operatorname{div} \bu \Id \right)
+\alpha \nabla p &=\bfe\\
\operatorname{div}\left(\bkappa \nabla p\right)
-\frac{\alpha^{2}}{\iota} p-\alpha \operatorname{div} \bu
&=f_f  .
\end{align}
\end{subequations}
\begin{figure}[t]
  \centering
  \includegraphics[scale=1.1]{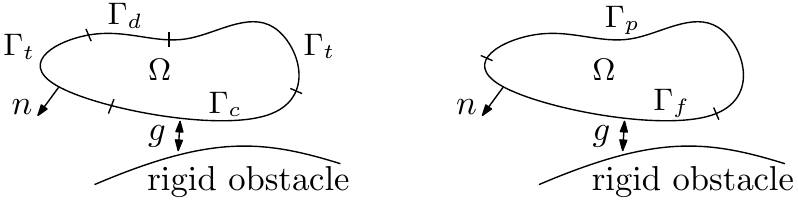}
  \caption{The domain under consideration for the Biot contact problem}
  \label{fig:biotobstaclemodel}
\end{figure}

For further scaling of these equations we refer to \cite{LeeMarWin:17}. To complete the system \eqref{biotsystem2F}, suitable boundary conditions have to be prescribed. To this purpose, the boundary is decomposed twice into non-overlapping, disjoint subsets that cover the entire boundary, namely $\Gamma = \partial \Omega=\Gamma_{p} \cup \Gamma_{f}=\Gamma_{c} \cup \Gamma_{d} \cup \Gamma_{t}.$ 
We assume that both $\Gamma_{p}$ and $\Gamma_{d}$ have positive measure, $\overline{\Gamma_d}\cap \overline{\Gamma_c} = \emptyset$ {for simplicity}, and prescribe the homogeneous mixed boundary conditions
 \begin{alignat*}{2}
 p&=0 \text{ on }\Gamma_{p}, 
 &\qquad 
 \bkappa \nabla {p} \cdot \mathbf{n}&= 0\text{ on }\Gamma_{f}
 \\
  \mathbf{u}&=\mathbf{0}\text{ on }\Gamma_{d},
  &\qquad 
  \bsigma \cdot \mathbf{n}&=\mathbf{0}\text{ on }\Gamma_{t} 
 \end{alignat*}
 where $\mathbf{n}$ is the outward unit normal vector. The incorporation of inhomogeneous Neumann data is straightforward and only lengthens the formulas and {proofs}.
 On the remaining boundary part $\Gamma_c$, which potentially comes into a frictionless contact with the rigid obstacle, we apply the so-called Signorini contact conditions, see e.g.~\cite{Kikuchi1988Contact}, 
 \begin{align}
  \label{eq:Signori}
   \bu_\bn \leq g , \qquad \bsigma_\bn \leq 0, \qquad  \bsigma_\bn \left[
     \bu_\bn - g \right]= 0
     \qquad \text{on }\Gamma_c 
 \end{align}
 with normal displacement $\bu_\bn = \bu \cdot \bn$ and contact pressure $\bsigma_\bn = (\bsigma \cdot \bn) \cdot \bn$,
 and the no friction condition
 $    \bsigma_\bt = \mathbf{0} \text{ on }\Gamma_c$
 where the tangential stress is given by $\bsigma_\bt = \bsigma \cdot \bn - \bsigma_\bn \bn$.
 
 These boundary conditions naturally lead to the Hilbert spaces
$$ V = H_{0, \Gamma_{p}}^{1}(\Omega)=\left\{ v \in H^{1}(\Omega):\left.v\right|_{\Gamma_{p}}=0\right\} \quad
\text{and} \quad \bV = \left[H_{0, \Gamma_{d}}^{1}(\Omega)\right]^{d}$$
and {to the} following variational formulation
\begin{subequations} \label{biotsystemVF1}
\begin{align}
-\langle\div
\left(2\tau \beps(\bu)+\iota \operatorname{div} \bu \Id\right),\bv
\rangle+\left(\alpha \nabla p,\bv\right)&=\langle\bfe,\bv \rangle\\
\langle \operatorname{div}\left(\bkappa \nabla p\right)
-\frac{\alpha^{2}}{\iota} p-\alpha \operatorname{div} \bu,q\rangle
&=\langle f_f,q \rangle 
\end{align}
\end{subequations}
where $\langle \cdot,\cdot \rangle$ is the duality pairing between the involved Hilbert spaces naturally extending the $L^2$-inner product $(\cdot,\cdot)$. An index at these indicates the integration domain if it is not $\Omega$.
Integration by parts leads to
\begin{align*}
 \label{biotsystemVF2}
\begin{split}
(2\tau \beps(\bu)\!+\!\iota (\divu) \Id,\nabla \bv)-\left(\alpha p,\divv \right)
- \langle \left(2\mu \beps(\bu)\!+\!\iota \operatorname{div} \bu \Id \!-\! \alpha p \Id\right) \cdot \bn,
\bv
\rangle_{\Gamma}
&=\langle \bfe,\bv \rangle\\
-
\left(\bkappa \nabla p, \nabla q\right)
+\langle
\bkappa \nabla p \cdot \bn, q
\rangle_\Gamma
-(\frac{\alpha^{2}}{\iota} p+\alpha \operatorname{div} \bu
,q)
&=\langle f_f,q \rangle  .
\end{split}
\end{align*}
With the use of the boundary conditions, any $(\bv,q)\in \bV\times V$ allows for
\begin{subequations} \label{biotsystemVF3}
\begin{align} 
2\tau (\beps(\bu), \beps(\bv))+(\iota\divu-\alpha p, \divv)
&=\langle \bfe, \bv \rangle +
\langle
\bsigma \cdot \bn, \bv \rangle_{\Gamma_c} \label{biotsystemVF3A}
\\
-\alpha\left(\operatorname{div} \bu, q\right)-\frac{\alpha^{2}}{\iota}\left(p, q\right)-\left(\bkappa \nabla p, \nabla q\right)&=\langle f_f, q\rangle \label{biotsystemVF3B}.
\end{align}
\end{subequations}
If we restrict the test functions $\bv$ to lie in
\begin{align}
    \bK = \{ \bv \in \bV : \ \bv_\bn \leq g \text{ on } \Gamma_c\} 
\end{align}
we can eliminate the unknown quantity $\langle
\bsigma \cdot \bn, \bv \rangle_{\Gamma_c}$ on the right hand side.
Obviously, $\bK$ is a closed and convex subset of $\bV$. If further $g \in H^{1/2}(\Gamma_c)$ is sufficiently large, e.g.~$g\geq 0$, then $\bK \neq \emptyset$, which we assume from now on.
Taking the test function in \eqref{biotsystemVF3A} to be $\bv-\bu$ with some $\bv \in \bK$ we obtain with \eqref{eq:Signori} and the no-friction condition
\begin{align*}
\langle \bsigma\cdot\bn, \bv - \bu \rangle_{\Gamma_c}
&= \langle \bsigma_\bn,  \bv_\bn - \bu_\bn  \rangle_{\Gamma_c} + \langle \bsigma_\bt, \bv - \bu\rangle_{\Gamma_c} \\
& = \langle \bsigma_\bn, \bv_\bn-g \rangle_{\Gamma_c} \\
& \geq 0 .
\end{align*}
Therewith, the Biot contact problem can be written as a variational inequality problem with a perturbed saddle point problem structure. That is: Find a pair $(\bu,p) \in \bK \times V$ such that
\begin{subequations}\label{VIabstract}
\begin{alignat}{2}
a(\bu,\bv - \bu) - b(\bv - \bu,p) &\geq \langle \bfe, \bv - \bu \rangle & \quad& \forall\ \bv \in \bK \label{VIabstractA}\\
-b(\bu,q)-c(p,q)&=\langle f_f, q \rangle &\quad& \forall\ q \in V  \label{VIabstractB}
\end{alignat}
\end{subequations}
with the three bilinear forms
\begin{align*}
a(\bu,\bv) &= 2\tau(\beps(\bu), \beps(\bv))+\iota(\operatorname{div} \bu, \operatorname{div} \bv) & \quad \\
 b(\bu,q) &= \alpha\left(\operatorname{div} \bu,q\right)\\
 c(p,q) &= \frac{\alpha^{2}}{\iota}\left(p, q\right)+\left(\bkappa \nabla p, \nabla q\right) .
\end{align*}
 We emphasize that for $|\Gamma_c|=0$ we have $\bK = \bV$ and the system \eqref{VIabstract} reduces to the classical two field formulation of the Biot problem \cite{MurThoLou:96}.
Note that $a(\cdot,\cdot)$ and $c(\cdot,\cdot)$ are symmetric and even inner products inducing the parameter-dependent norms
\begin{alignat*}{2}
\norm{\bv}_{\bV}^{2}
&= a(\bv,\bv) =2\tau \|\beps(\bv)\|_{0}^{2}+\iota\|\operatorname{div} \bv\|_{0}^{2}  &\quad & \forall \ \bv \in \bV \\
\norm{q}_V^{2}
&=c(q,q)=\frac{\alpha^{2}}{\iota}\left\|q\right\|_{0}^{2}+\left(\bkappa \nabla q, \nabla q\right)  & \quad & \forall \ q \in V. 
\end{alignat*}
Due to Korn's inequality these {norms} are equivalent to the standard  $[H^1_{0,\Gamma_d}(\Omega)]^d$ and $H^1_{0,\Gamma_p}(\Omega)$ norms
but allow for parameter robust estimations. For the latter we also need the dual norms
\begin{align}
    \norm{ \cdot }_{\bV^*} = \sup_{\bv \in \bV \setminus\{\mathbf{0}\} } \frac{\langle \cdot , \bv \rangle}{\norm{\bv}_{\bV}} \quad \text{and} \quad
    \norm{ \cdot }_{V^*} = \sup_{q \in V \setminus\{0\} } \frac{\langle \cdot , q \rangle}{\norm{q}_V},
\end{align}
which are equivalent to the standard (dual) norms of $([H^1_{0,\Gamma_d}(\Omega)]^d)^*$ and $(H^1_{0,\Gamma_p}(\Omega))^*$ as well.
Let $\gamma: \bV \rightarrow H^{1/2}(\Gamma_c)$ such that $\gamma \bv = \bv|_{\Gamma_c}\cdot \bn$ is the normal component of the trace restricted to $\Gamma_c$. Moreover, we need to equip the trace space $H^{1/2}(\Gamma_c)$ with the norm
\begin{align} \label{eq:defTraceNorm}
    \norm{v}_{\bV,\Gamma_c} = \inf_{\bw \in \bV,\;\gamma \bw = v} \norm{\bw}_{\bV}
\end{align}
which is equivalent to various standard $\| \cdot \|_{H^{1/2}(\Gamma_c)}$ norms, and denote by
\begin{align} \label{eq:defDualTraceNorm}
    \norm{\mu}_{\bV^*,\Gamma_c} = \sup_{ v \in H^{1/2}(\Gamma_c)\setminus\{0\}} \frac{\langle \mu,v\rangle_{\Gamma_c}}{\norm{v}_{\bV,\Gamma_c}}
\end{align}
its dual norm which is equivalent to the $\tilde{H}^{-1/2}(\Gamma_c)$ dual norm.
We summarize the equivalence constants in the following lemma.
\begin{lemma} \label{lem:norm_equi}
There holds
\begin{align*}
    2\tau c_{Korn} \|\bv \|_{H^1(\Omega)}^2 &\leq \norm{\bv}_{\bV}^2 \leq (2\tau+d \iota)\|\bv \|_{H^1(\Omega)}^2 \\
    \operatorname{ev}_{\min}(\bkappa)c_{Poin} \|q \|_{H^1(\Omega)}^2 &\leq \norm{q}_V^2 \leq \max\left\{ \frac{\alpha^2}{\iota},\operatorname{ev}_{\max}(\bkappa) \right\}\|q \|_{H^1(\Omega)}^2 \\
        (2\tau+d \iota)^{-1}\|\bff \|_{([H^1_{0,\Gamma_d}(\Omega)]^d)^*}^2 &\leq \norm{\bff}_{\bV^*}^2 \leq (2\tau c_{Korn})^{-1} \|\bff \|_{([H^1_{0,\Gamma_d}(\Omega)]^d)^*}^2 \\
    \left(\max\left\{ \frac{\alpha^2}{\iota},\operatorname{ev}_{\max}(\bkappa) \right\}\right)^{-1}\!\!\! \|f \|_{(H^1_{0,\Gamma_p}(\Omega))^*}^2 &\leq \norm{f}_{V^*}^2 \leq (\operatorname{ev}_{\min}(\bkappa)c_{Poin})^{-1} \|f \|_{(H^1_{0,\Gamma_p}(\Omega))^*}^2 \\
     2\tau c_{Korn} \|v \|_{H^{1/2}(\Gamma_c)}^2 &\leq \norm{v}_{\bV,\Gamma_c}^2 \leq (2\tau+d \iota)\|v \|_{H^{1/2}(\Gamma_c)}^2 \\
     (2\tau+d \iota)^{-1}\|\mu \|_{\tilde{H}^{-1/2}(\Gamma_c)}^2 &\leq \norm{\mu}_{\bV^*,\Gamma_c}^2 \leq (2\tau c_{Korn})^{-1}\|\mu \|_{\tilde{H}^{-1/2}(\Gamma_c)}^2
\end{align*}
where $c_{Korn}$, $c_{Poin}$ is the {ellipticity} constant from Korn's, Poincar\'e's inequality, respectively, $\operatorname{ev}_{\min}(\bkappa)$, $\operatorname{ev}_{\max}(\bkappa)$ is the minimal, maximal eigenvalue of $\bkappa$, respectively, and the standard trace norm is defined by
\begin{align*}
    \|v\|_{H^{1/2}(\Gamma_c)} = \inf_{\bw \in \bV,\;\gamma \bw = v} \|\bw\|_{H^1(\Omega)}.
\end{align*}
\end{lemma}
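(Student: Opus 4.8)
The plan is to establish the two ``primal'' equivalences (the first two lines) by elementary pointwise estimates together with Korn's and Poincar\'e's inequalities, and then to obtain the remaining four lines as formal consequences: the dual-norm equivalences (lines three and four) by a general duality argument, the trace-norm equivalence (line five) by passing the first line through the infima defining the norms, and the dual trace-norm equivalence (line six) by the same duality argument applied to line five.

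For the first line, recall $\norm{\bv}_{\bV}^2 = 2\tau\|\beps(\bv)\|_0^2 + \iota\|\div \bv\|_0^2$. The upper bound follows from the pointwise inequalities $|\beps(\bv)|^2 \le |\nabla \bv|^2$ (the symmetric part never increases the Frobenius norm) and $(\div \bv)^2 = (\operatorname{tr}\nabla \bv)^2 \le d\,|\nabla \bv|^2$ (Cauchy--Schwarz), whence $\norm{\bv}_{\bV}^2 \le (2\tau + d\iota)\|\nabla \bv\|_0^2 \le (2\tau+d\iota)\|\bv\|_{H^1(\Omega)}^2$. For the lower bound I discard the nonnegative term $\iota\|\div \bv\|_0^2$ and invoke Korn's inequality on $\bV = [H^1_{0,\Gamma_d}(\Omega)]^d$ --- legitimate since $\Gamma_d$ has positive measure --- to get $\norm{\bv}_{\bV}^2 \ge 2\tau\|\beps(\bv)\|_0^2 \ge 2\tau c_{Korn}\|\bv\|_{H^1(\Omega)}^2$. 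The second line is analogous: using that $\bkappa$ is symmetric and uniformly positive definite, $\operatorname{ev}_{\min}(\bkappa)|\nabla q|^2 \le \nabla q^\top \bkappa \nabla q \le \operatorname{ev}_{\max}(\bkappa)|\nabla q|^2$ pointwise, so $\norm{q}_V^2 \le \max\{\alpha^2/\iota,\operatorname{ev}_{\max}(\bkappa)\}\|q\|_{H^1(\Omega)}^2$ for the upper bound, and for the lower bound I drop $\frac{\alpha^2}{\iota}\|q\|_0^2$ and apply Poincar\'e's inequality on $H^1_{0,\Gamma_p}(\Omega)$ (again $\Gamma_p$ has positive measure) to obtain $\norm{q}_V^2 \ge \operatorname{ev}_{\min}(\bkappa)\|\nabla q\|_0^2 \ge \operatorname{ev}_{\min}(\bkappa)c_{Poin}\|q\|_{H^1(\Omega)}^2$.

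For lines three and four I use the elementary fact that if $c^2\|x\|_B^2 \le \|x\|_A^2 \le C^2\|x\|_B^2$ on a normed space, then the dual norms satisfy $C^{-2}\|\ell\|_{B^*}^2 \le \|\ell\|_{A^*}^2 \le c^{-2}\|\ell\|_{B^*}^2$, which is immediate from the definition of the supremum in the dual norm. Applying this to line one with $A=\norm{\cdot}_{\bV}$, $B=\|\cdot\|_{H^1(\Omega)}$, $c^2 = 2\tau c_{Korn}$, $C^2 = 2\tau + d\iota$ yields line three; applying it to line two yields line four. For line five I note that both $\norm{\cdot}_{\bV,\Gamma_c}$ and $\|\cdot\|_{H^{1/2}(\Gamma_c)}$ are defined as infima over the \emph{same} set $\{\bw\in\bV:\gamma\bw=v\}$ of, respectively, $\norm{\bw}_{\bV}$ and $\|\bw\|_{H^1(\Omega)}$; feeding the $\bw$-wise inequality of line one into these infima gives line five directly. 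Finally, line six follows from line five by the same duality argument as for lines three and four, now with $A=\norm{\cdot}_{\bV,\Gamma_c}$, $B=\|\cdot\|_{H^{1/2}(\Gamma_c)}$ and their duals $\norm{\cdot}_{\bV^*,\Gamma_c}$, $\|\cdot\|_{\tilde H^{-1/2}(\Gamma_c)}$.

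There is no genuine obstacle here; the only points requiring care are the bookkeeping of squares versus square roots when dualizing, and making sure the hypotheses of Korn's and Poincar\'e's inequalities (positive measure of $\Gamma_d$ and $\Gamma_p$, assumed in the model setup) are in force. I would write out lines one and two with the pointwise estimates explicit and then dispatch lines three through six compactly via the two reusable observations: duality reverses the inequalities and inverts the constants, and infima over a common lifting set preserve them.
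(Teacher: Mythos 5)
Your proof is correct, and the paper in fact states Lemma~\ref{lem:norm_equi} without any proof, treating these equivalences as standard; your argument (pointwise bounds plus Korn/Poincar\'e for the primal norms, then dualization and passage through the common lifting set for the remaining four lines) is exactly the intended routine verification, with the constants landing in the right places.
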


\section{Well-posedness}
\label{sec:well-posedness}
Throughout the rest of the paper let $\bfe \in \bV^*$ and $f_f \in V^*$. 
Since $a(\cdot,\cdot)$ and $c(\cdot,\cdot)$ are inner products inducing the norms $\norm{\cdot}_{\bV}$ and $\norm{\cdot}_V$, respectively, their {ellipticity} and continuity constants are one. Moreover, the Cauchy-Schwarz inequality implies the continuity of $b(\cdot,\cdot)$ with respect to these norms, i.e.
\begin{align} \label{B_continuous}
    b(\bv,q) \leq \left( \iota\|\operatorname{div} \bv\|_{0}^{2} \right)^{1/2}\left( \frac{\alpha^{2}}{\iota}\left\|q\right\|_{0}^{2} \right)^{1/2} \leq \norm{\bv}_{\bV} \norm{q}_V.
\end{align}

For the analysis but not for the numerics it is helpful to eliminate the pressure field $p$ from \eqref{VIabstract}. For that let $A:\bV \rightarrow \bV^*$, $B:\bV \rightarrow V^*$ and $C:V \rightarrow V^*$ be the operators associated with the three bilinear forms, i.e.
\begin{alignat*}{2}
    \langle A \bu,\bv\rangle &= a(\bu,\bv) &\quad& \forall\ \bu,\bv \in \bV \\
    \langle B \bv,q\rangle &= b(\bv,q) &\quad& \forall\ \bv \in \bV, \ \forall\ q \in V \\
    \langle C p,q\rangle &= c(p,q) &\quad& \forall\ p,q \in V.
\end{alignat*}
We denote by $B^\top:V \rightarrow \bV^*$ the adjoint operator of $B$. Recall that these operators inherit all the properties form their respective bilinear {forms}.
With $C$ at hand we may solve \eqref{VIabstractB} for
\begin{align} \label{SolutionP}
    p = -C^{-1} \left(B \bu + f_f \right)
\end{align}
and, therewith, reduce \eqref{VIabstract} to the variational inequality: Find $\bu \in \bK$ such that
\begin{align} \label{ReducedVI}
  \langle D \bu, \bv-\bu \rangle \geq \langle \bl ,\bv-\bu\rangle \quad \forall \ \bv \in \bK
\end{align}
with $D=A+B^\top C^{-1} B : \bV \rightarrow \bV^*$ and $\bl = \bfe - B^\top C^{-1} f_f \in \bV^*$.
Since $A$ and $C$ are symmetric, continuous and {elliptic}, and $B$ is continuous, we find that the new operator $D$ is symmetric, continuous and $\bV$-{elliptic} as well.

\begin{lemma} \label{lem:D_coercive}
The Operator $C^{-1}$ is continuous with continuity constant one.
The Operator $D$ is symmetric, continuous and {elliptic} with continuity constant two and {ellipticity} constant one.
\end{lemma}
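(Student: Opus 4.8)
The plan is to establish the claimed constants one after another, starting with $C^{-1}$. Since $c(\cdot,\cdot)$ is an inner product inducing the norm $\norm{\cdot}_V$, the operator $C:V\to V^*$ is an isometry in the sense that $\norm{Cq}_{V^*}=\norm{q}_V$ for all $q\in V$: indeed $\norm{Cq}_{V^*}=\sup_{r\neq 0}\langle Cq,r\rangle/\norm{r}_V=\sup_{r\neq 0}c(q,r)/\norm{r}_V$, which by Cauchy–Schwarz for the inner product $c$ is bounded by $\norm{q}_V$ and is attained (up to the supremum) at $r=q$. Writing $p=C^{-1}f$ for $f\in V^*$ and $q=Cp$, this gives $\norm{C^{-1}f}_V=\norm{p}_V=\norm{Cp}_{V^*}=\norm{f}_{V^*}$, so $C^{-1}$ is continuous with continuity constant exactly one.

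Next I would treat $D=A+B^\top C^{-1}B$. Symmetry is immediate: $A$ and $C$ (hence $C^{-1}$) are symmetric, and $\langle B^\top C^{-1}B\,\bu,\bv\rangle=\langle C^{-1}B\bu,B\bv\rangle=\langle C^{-1}B\bv,B\bu\rangle$ by symmetry of $C^{-1}$, so $D$ is symmetric. For ellipticity, note $\langle D\bv,\bv\rangle=a(\bv,\bv)+\langle C^{-1}B\bv,B\bv\rangle=\norm{\bv}_{\bV}^2+c(C^{-1}B\bv,C^{-1}B\bv)\ge\norm{\bv}_{\bV}^2$, because $C^{-1}$ is self-adjoint and positive (as $C$ is), so the added term is $\norm{C^{-1}B\bv}_V^2\ge 0$; hence the ellipticity constant is one. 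For continuity, I would estimate $\langle D\bu,\bv\rangle\le a(\bu,\bv)+\langle C^{-1}B\bu,B\bv\rangle$; the first term is bounded by $\norm{\bu}_{\bV}\norm{\bv}_{\bV}$ since $a$'s continuity constant is one, and the second is bounded, using the Cauchy–Schwarz inequality for the inner product $c$ together with the isometry $\norm{C^{-1}Bw}_V=\norm{Bw}_{V^*}$ and the continuity estimate $\norm{B\bw}_{V^*}\le\norm{\bw}_{\bV}$ from \eqref{B_continuous}, by $\norm{C^{-1}B\bu}_V\norm{C^{-1}B\bv}_V=\norm{B\bu}_{V^*}\norm{B\bv}_{V^*}\le\norm{\bu}_{\bV}\norm{\bv}_{\bV}$. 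Adding the two contributions yields continuity constant two.

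The only mildly delicate point — and the one I would be most careful about — is the bookkeeping of which "one" comes from where: the continuity constant one of $a$, the continuity constant one of $C^{-1}$ just proved, and the bound $\norm{B\bw}_{V^*}\le\norm{\bw}_{\bV}$ extracted from the middle inequality in \eqref{B_continuous} (note \eqref{B_continuous} as written gives $b(\bv,q)\le\norm{\bv}_{\bV}\norm{q}_V$, and taking the supremum over $q$ yields exactly $\norm{B\bv}_{V^*}\le\norm{\bv}_{\bV}$). One must also make sure the identification $\langle B^\top\mu,\bv\rangle=\langle\mu,B\bv\rangle$ and $\langle C^{-1}B\bv,B\bv\rangle=c(C^{-1}B\bv,C^{-1}B\bv)$ is used consistently — the latter because applying $C$ to the first slot of $c(C^{-1}\cdot,\cdot)$ recovers the duality pairing. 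Everything else is a routine application of Cauchy–Schwarz in the energy inner products, so no genuine obstacle arises beyond this constant-tracking.
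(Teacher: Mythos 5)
Your proposal is correct and follows essentially the same route as the paper's proof: Cauchy--Schwarz in the energy inner products, the continuity of $C^{-1}$ with constant one, and the bound $\norm{B\bw}_{V^*}\leq\norm{\bw}_{\bV}$ from \eqref{B_continuous} combine to give the constants two and one for $D$. The only cosmetic difference is that you establish the slightly stronger isometry property of $C^{-1}$, whereas the paper derives the continuity constant from the one-line estimate $\norm{C^{-1}f}_{V}^2=\langle f,C^{-1}f\rangle\leq\norm{f}_{V^*}\norm{C^{-1}f}_{V}$.
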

\begin{proof}
 Let $f \in V^*$ be arbitrary. Then
 \begin{align*}
     \norm{C^{-1} f}_{V}^2 = \langle C (C^{-1} f), C^{-1} f \rangle = \langle f, C^{-1} f \rangle \leq \norm{f}_{V^*} \norm{C^{-1} f}_{V},
 \end{align*}
 which shows that the continuity constant of $C^{-1}$ is one.
 
 Let $\bu$, $\bv \in \bV$ be arbitrary. Then
 \begin{align*}
     \langle D \bu,\bv \rangle &= \langle A \bu,\bv \rangle + \langle B^\top C^{-1} B \bu,\bv \rangle \\
     &= \langle A \bu,\bv \rangle + \langle  C^{-1} B \bu,B \bv \rangle \\
     &\leq \norm{\bu}_{\bV}\norm{\bv}_{\bV} + \norm{B\bu}_{V^*}\norm{B\bv}_{V^*} \\
     & \leq 2 \norm{\bu}_{\bV}\norm{\bv}_{\bV}.
 \end{align*}
 Thus, the continuity constant of $D$ is two.
 The {ellipticity} follows from
  \begin{align*}
     \langle D \bv,\bv \rangle = \langle A \bv,\bv \rangle + \langle C^{-1} B \bv,B\bv \rangle = \norm{\bv}_{\bV}^2 + \norm{C^{-1} B \bv}_{V}^2 \geq \norm{\bv}_{\bV}^2
 \end{align*}
 for all $\bv \in \bV.$ The symmetry of $D$ follows {directly} from the symmetry of $A$ and $C$.
\end{proof}

The previous lemma guarantees that the reduced problem \eqref{ReducedVI} falls within the scope of elliptic variational inequalities of the first kind while the original problem \eqref{VIabstract} does not.

\begin{remark}
By subtracting \eqref{VIabstractB} from \eqref{VIabstractA} we obtain the equivalent variational inequality: Find $(\bu,p) \in \bK \times V$ such that
\begin{align} \label{eq:directVI}
\tilde{a}((\bu,p),(\bv - \bu,q-p)) \geq \langle \bfe, \bv - \bu \rangle - \langle f_f,q-p \rangle  \quad \forall\ (\bv,q) \in \bK\times V
\end{align}
with 
\begin{align*}
    \tilde{a}((\bu,p),(\bv,q)) = a(\bu,\bv) - b(\bv,p) + b(\bu,q) + c(p,q). 
\end{align*}
Indeed $\tilde{a}(\cdot,\cdot)$ is continuous and {elliptic} and thus the Biot contact problem can be analyzed as an elliptic variational inequality of the first kind. As
\begin{gather*}
    \tilde{a}((\bv,q),(\bv,q)) = a(\bv,\bv) + c(q,q) = \norm{\bv}_{\bV}^2 + \norm{q}_{V}^2, \\
    \tilde{a}((\bu,-C^{-1} B \bu),(\bv,q)) = \langle A \bu,\bv \rangle + \langle B \bv, C^{-1} B \bu \rangle = \langle D \bu,\bv \rangle \leq 2 \norm{\bu}_{\bV}\norm{\bv}_{\bV}
\end{gather*}
the {ellipticity} and continuity constants are not better than for $D$. Hence, we cannot necessarily expect better constants in the subsequent estimates but only shorter and simpler proofs. A direct or naive application of known results for elliptic variational inequalities to \eqref{eq:directVI} would lead to reduced guaranteed convergence rates in both $\bu$ and $p$ while our approach yields reduced convergence rate in $\bu$ only and optimal rates in $p$, see Theorem~\ref{thm:convergence_rates}.
Moreover, as $\tilde{a}(\cdot,\cdot)$ is not symmetric, the problem \eqref{eq:directVI} is not (directly) equivalent to a minimization problem unlike \eqref{ReducedVI}. If one wants to extend the Biot contact problem by (Tresca) friction, the two problems \eqref{eq:directVI} and \eqref{ReducedVI} become variational inequalities of the second kind, but the existence of a solution is only immediately guaranteed for \eqref{ReducedVI} via an equivalent minimization problem. Hence, we pursue \eqref{ReducedVI} to obtain better guaranteed convergence rate and to lay the foundation for a later generalization of the Biot contact problem by friction.
\end{remark}

\begin{theorem} \label{thm:existence}
  There exists exactly one solution $(\bu,p)$ to \eqref{VIabstract}. Moreover, that solution depends Lipschitz-continuously on the data $(\bfe,f_f,g)$, i.e.
\begin{align*}
\left(\norm{\bu-\widetilde{\bu}}_{\bV}^2 + \norm{ p - \widetilde{p} }_V^2\right)^{1/2}
     &\leq  \norm{\bfe- \widetilde{\bfe}}_{\bV^*}+\norm{f_f - \widetilde{f_f} }_{V^*}  +3\norm{g-\widetilde{g}}_{\bV,\Gamma_c}
  \end{align*}
  for $(\bu,p)$, $(\widetilde{\bu},\widetilde{p})$ the solution to the data $(\bfe,f_f,g)$, $(\widetilde{\bfe},\widetilde{f_f},\widetilde{g})$, respectively.
\end{theorem}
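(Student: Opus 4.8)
The plan is to establish existence and uniqueness by passing to the reduced inequality \eqref{ReducedVI}, and to obtain the Lipschitz bound by a perturbation argument on the equivalent formulation \eqref{eq:directVI}, decoupling the perturbation of the loads from that of the obstacle.

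\emph{Existence and uniqueness.} By Lemma~\ref{lem:D_coercive} the operator $D$ is bounded, symmetric and $\bV$-elliptic, $\bl \in \bV^*$, and $\bK$ is a nonempty, closed, convex subset of $\bV$. Hence the classical theory of elliptic variational inequalities of the first kind — equivalently, minimisation over $\bK$ of the strictly convex, coercive functional $\bv \mapsto \frac{1}{2}\langle D\bv,\bv\rangle - \langle \bl,\bv\rangle$ — provides a unique $\bu \in \bK$ solving \eqref{ReducedVI}. Setting $p := -C^{-1}(B\bu + f_f) \in V$ makes \eqref{VIabstractB} hold by construction, and substituting $-b(\bv-\bu,p) = \langle B^\top C^{-1}(B\bu + f_f),\bv-\bu\rangle$ into \eqref{VIabstractA} rewrites its left-hand side as $\langle D\bu,\bv-\bu\rangle + \langle B^\top C^{-1}f_f,\bv-\bu\rangle$, so \eqref{VIabstractA} becomes exactly \eqref{ReducedVI}; thus $(\bu,p)$ solves \eqref{VIabstract}. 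Conversely, any solution of \eqref{VIabstract} satisfies $p = -C^{-1}(B\bu+f_f)$ by \eqref{SolutionP}, so its $\bu$-component solves \eqref{ReducedVI} and hence is the unique one, and then $p$ is determined as well.

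\emph{Stability: reduction and the load perturbation.} Write $\|(\bv,q)\| := (\norm{\bv}_\bV^2 + \norm{q}_V^2)^{1/2}$ for the product norm and compare the solutions for $(\bfe,f_f,g)$ and $(\widetilde\bfe,\widetilde{f_f},\widetilde g)$ through the intermediate solution $(\bu^\sharp,p^\sharp)$ for $(\bfe,f_f,\widetilde g)$; by the triangle inequality for $\|\cdot\|$ it suffices to treat (a) the loads $(\bfe,f_f)$ perturbed with the obstacle fixed, and (b) the obstacle $g$ perturbed with the loads fixed. In case (a) the admissible set $\bK\times V$ is the same for both solutions, so each can be inserted as test pair in the other's inequality \eqref{eq:directVI}; adding the two inequalities and using $\tilde a((\be,e_p),(\be,e_p)) = \|(\be,e_p)\|^2$ for the difference $(\be,e_p)$ together with $\langle\bfe-\widetilde\bfe,\be\rangle - \langle f_f-\widetilde{f_f},e_p\rangle \le (\norm{\bfe-\widetilde\bfe}_{\bV^*}^2 + \norm{f_f-\widetilde{f_f}}_{V^*}^2)^{1/2}\|(\be,e_p)\|$ gives $\|(\be,e_p)\| \le \norm{\bfe-\widetilde\bfe}_{\bV^*} + \norm{f_f-\widetilde{f_f}}_{V^*}$.

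\emph{The varying obstacle (the main obstacle).} Case (b) is the delicate point, since now $\bK(g)\neq\bK(\widetilde g)$. Because $g-\widetilde g\in H^{1/2}(\Gamma_c)$ lies in the range of $\gamma$ — a fact that rests on the standing hypothesis $\overline{\Gamma_d}\cap\overline{\Gamma_c}=\emptyset$ — definition \eqref{eq:defTraceNorm} lets me pick, for any $\epsilon>0$, a lifting $\bw\in\bV$ with $\gamma\bw = g-\widetilde g$ and $\norm{\bw}_\bV \le \norm{g-\widetilde g}_{\bV,\Gamma_c} + \epsilon$. Then $\widetilde\bu + \bw \in \bK(g)$ since $\gamma(\widetilde\bu+\bw) = \gamma\widetilde\bu + (g-\widetilde g) \le \widetilde g + (g-\widetilde g) = g$, and symmetrically $\bu - \bw \in \bK(\widetilde g)$. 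Inserting $(\widetilde\bu+\bw,\widetilde p)$ as test pair in the inequality \eqref{eq:directVI} for $(\bu,p)$ and $(\bu-\bw,p)$ in the one for $(\widetilde\bu,\widetilde p)$ and adding, the coinciding load terms cancel, and since $\tilde a((\be,e_p),(\bw,0)) = a(\be,\bw) - b(\bw,e_p) \le \norm{\bw}_\bV(\norm{\be}_\bV+\norm{e_p}_V) \le \sqrt{2}\,\norm{\bw}_\bV\,\|(\be,e_p)\|$, there remains
\[
\|(\be,e_p)\|^2 = \tilde a((\be,e_p),(\be,e_p)) \le \tilde a((\be,e_p),(\bw,0)) \le \sqrt{2}\,\norm{\bw}_\bV\,\|(\be,e_p)\| ,
\]
hence $\|(\be,e_p)\| \le \sqrt{2}\,\norm{\bw}_\bV$; letting $\epsilon\to 0$ yields $\|(\be,e_p)\| \le \sqrt{2}\,\norm{g-\widetilde g}_{\bV,\Gamma_c} \le 3\,\norm{g-\widetilde g}_{\bV,\Gamma_c}$. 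Adding the bounds of (a) and (b) through the triangle inequality gives the asserted estimate. The only genuine difficulty, beyond Cauchy--Schwarz bookkeeping with the already-established mapping properties of $a$, $b$, $c$, $C^{-1}$ and $D$, is precisely this handling of the moving obstacle: verifying that the shifted competitors $\widetilde\bu+\bw$ and $\bu-\bw$ are admissible and that $\bw$ can be chosen with $\bV$-norm arbitrarily close to $\norm{g-\widetilde g}_{\bV,\Gamma_c}$, which is exactly where $\overline{\Gamma_d}\cap\overline{\Gamma_c}=\emptyset$ enters.
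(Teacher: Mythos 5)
Your argument is correct, and while the existence/uniqueness part coincides with the paper's (unique solvability of the reduced inequality \eqref{ReducedVI} by ellipticity of $D$, recovery of $p$ via \eqref{SolutionP}, and the equivalence of the two formulations), the stability estimate follows a genuinely different route. The paper makes a single comparison within the split two-field formulation: it translates both unknowns to the fixed cone $\bK_0=\{\bv\in\bV:\bv_\bn\le 0 \text{ on }\Gamma_c\}$ by liftings $\bG,\widetilde{\bG}$ of the two gaps, swaps the translated unknowns as test functions in the two inequalities, and eliminates the resulting pressure coupling through the subtracted equation \eqref{DiffVIabstractB}; since this controls $\bu_0-\widetilde{\bu}_0$ rather than $\bu-\widetilde{\bu}$, a final triangle inequality costs one additional $\norm{g-\widetilde{g}}_{\bV,\Gamma_c}$ and produces the constant $3$. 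You instead split the perturbation through an intermediate solution (loads first, obstacle second), work with the combined nonsymmetric form \eqref{eq:directVI}, and shift the \emph{test functions} rather than the unknowns by a near-minimal lifting $\bw$ of $g-\widetilde{g}$; the admissibility checks for $\widetilde{\bu}+\bw$ and $\bu-\bw$ are exactly right. This controls $\bu-\widetilde{\bu}$ directly and yields the sharper constant $\sqrt{2}$ on the gap term (hence trivially $\le 3$), at the price of invoking the equivalence with \eqref{eq:directVI} and one extra auxiliary problem. Both proofs ultimately rest on the same ingredients: ellipticity of the coupled form on the diagonal, the swap of admissible competitors, and the characterisation \eqref{eq:defTraceNorm} of the trace norm via liftings (which is where $\overline{\Gamma_d}\cap\overline{\Gamma_c}=\emptyset$ enters).
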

\begin{proof}
  Since $D$ is continuous and $\bV$-{elliptic} (see Lemma~\ref{lem:D_coercive}), $\bl \in \bV^*$ and $\bK$ is a non-empty, closed, convex set, the Stampacchia theorem implies unique existence of a $\bu \in \bK$ solving \eqref{ReducedVI}. For a given $\bu \in \bV$ the Lax-Milgram lemma implies the unique solvability of \eqref{VIabstractB} with the solution $p$ given by \eqref{SolutionP}. Due to the equivalence of \eqref{SolutionP}-\eqref{ReducedVI} with \eqref{VIabstract} the pair $(\bu,p)$ is the unique solution of \eqref{VIabstract}.
  
   Subtracting the two to the data $(\bfe,f_f,g)$ and  $(\widetilde{\bfe},\widetilde{f_f},\widetilde{g})$ corresponding \eqref{VIabstractB} from each other gives
  \begin{align} \label{DiffVIabstractB}
      -b(\bu - \widetilde{\bu},q)-c(p-\widetilde{p},q)=\langle f_f - \tilde{f}_f,q \rangle \quad \forall \ q \in V.
  \end{align}
  Next, let $\bG \in \bV$ be some lifting of $g$, i.e.~$\gamma(\bG) = g$, which existence is guaranteed by the trace theorem. Likewise $\widetilde{\bG}\in \bV$ is some lifting of $\widetilde{g}$. Furthermore, let $\bK_0 = \{ \bv \in \bV: \bv_\bn \leq 0 \text{ on } \Gamma_c\}$, i.e.~$ \bu =  \bG + \bu_0   \in \bG + \bK_0$ and $\widetilde{\bu} =\widetilde{\bG} +  \widetilde{\bu}_0  \in \widetilde{\bG} + \bK_0$.
 Therewith, the two \eqref{VIabstractA} become
 \begin{align*}
     a(\bu_0 + \bG, \bv - \bu_0) - b( \bv - \bu_0,p) &\geq \langle \bfe ,  \bv - \bu_0\rangle \quad \forall \ \bv \in \bK_0\\
     a(\widetilde{\bu}_0 + \widetilde{\bG}, \widetilde{\bv} - \widetilde{\bu}_0) - b( \widetilde{\bv} - \widetilde{\bu}_0,p) &\geq \langle \widetilde{\bfe} ,  \widetilde{\bv} - \widetilde{\bu}_0\rangle \quad \forall \ \widetilde{\bv} \in \bK_0.
 \end{align*}
  Choosing $\bv = \widetilde{\bu}_0$ and $\widetilde{\bv} = \bu_0$, and afterwards adding the resulting inequalities yields
    \begin{align*}
      \langle \widetilde{\bfe}-\bfe ,  \widetilde{\bu}_0-\bu_0\rangle + a(\bG-\widetilde{\bG} , \widetilde{\bu}_0-\bu_0) & \geq a(\bu_0-\widetilde{\bu}_0,\bu_0-\widetilde{\bu}_0) + b(\widetilde{\bu}_0-\bu_0 ,p-\widetilde{p}) \\
      & = a(\bu_0-\widetilde{\bu}_0,\bu_0-\widetilde{\bu}_0) + c(p-\widetilde{p},p-\widetilde{p}) \\
      & \quad - b(\widetilde{\bG}-\bG,p-\widetilde{p}) + \langle f_f - \widetilde{f_f},p-\widetilde{p} \rangle \\
      & = \norm{ \bu_0-\widetilde{\bu}_0 }_{\bV}^2 + \norm{ p - \widetilde{p} }_V^2\\
      & \quad - b(\widetilde{\bG}-\bG,p-\widetilde{p}) + \langle f_f - \widetilde{f_f},p-\widetilde{p} \rangle 
  \end{align*}
  where we employ \eqref{DiffVIabstractB} in the second line. Duality arguments, the continuity of $a(\cdot,\cdot)$ and $b(\cdot,\cdot)$ now imply 
  \begin{align*}
      \norm{ \bu_0-\widetilde{\bu}_0 }_{\bV}^2 +  \norm{ p - \widetilde{p} }_V^2 &\leq \left(\norm{ \bfe- \widetilde{\bfe}}_{\bV^*}  +\norm{ \bG-\widetilde{\bG} }_{\bV}\right) \norm{ \bu_0-\widetilde{\bu}_0 }_{\bV} \\
     & \quad +\left(\norm{ \bG-\widetilde{\bG} }_{\bV} +\norm{ f_f - \widetilde{f_f} }_{V^*} \right) \norm{ p - \widetilde{p} }_V .
  \end{align*}
  Choosing the lifting $\bG$ and $\widetilde{\bG}$ to minimize $\norm{ \bG-\widetilde{\bG} }_{\bV}$ we obtain with the definition of the trace norm $\norm{\cdot}_{\bV,\Gamma_c}$ and trivial algebra that
    \begin{align*}
      \left(\norm{ \bu_0-\widetilde{\bu}_0 }_{\bV}^2 + \norm{ p - \widetilde{p} }_V^2\right)^{1/2} \leq  \norm{ \bfe- \widetilde{\bfe} }_{\bV^*} + \norm{ f_f - \widetilde{f_f} }_{V^*} + 2\norm{ g-\widetilde{g} }_{\bV,\Gamma_c} .
  \end{align*}
  Finally, the assertion follows with the triangle inequality.
\end{proof}

\begin{corollary} \label{cor:aprioriEst}
The solution $(\bu,p)$ of \eqref{VIabstract} satisfies the a priori estimate
\begin{align*}
    \left(\norm{ \bu }_{\bV}^2 + \norm{ p }_V^2\right)^{1/2} &\leq  \norm{ \bfe }_{\bV^*} + \norm{ f_f }_{V^*} +3\norm{g}_{\bV,\Gamma_c}.
\end{align*}
If $\mathbf{0}\in \bK$, then there holds
\begin{align*}
    \left(\norm{ \bu }_{\bV}^2 + \norm{ p }_V^2\right)^{1/2} &\leq  \norm{ \bfe }_{\bV^*} + \norm{ f_f }_{V^*}.
\end{align*}
\end{corollary}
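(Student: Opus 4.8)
The plan is to obtain both estimates directly from the Lipschitz stability bound of Theorem~\ref{thm:existence} by choosing a convenient reference datum whose associated solution is known explicitly.

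For the first estimate I would compare the solution $(\bu,p)$ of \eqref{VIabstract} for the datum $(\bfe,f_f,g)$ against the solution for the homogeneous datum $(\widetilde{\bfe},\widetilde{f_f},\widetilde{g})=(\mathbf 0,0,0)$. For this datum the admissible set is $\bK_0=\{\bv\in\bV:\bv_\bn\le 0\text{ on }\Gamma_c\}$, which contains $\mathbf 0$ and is therefore non-empty; and the pair $(\widetilde{\bu},\widetilde{p})=(\mathbf 0,0)$ trivially satisfies \eqref{VIabstractA} (the left-hand side vanishes and the right-hand side is zero) and \eqref{VIabstractB} (both sides vanish). By the uniqueness part of Theorem~\ref{thm:existence}, $(\mathbf 0,0)$ is indeed the solution for the homogeneous datum. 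Inserting this into the Lipschitz estimate of Theorem~\ref{thm:existence} gives at once
\begin{align*}
\left(\norm{\bu}_{\bV}^2+\norm{p}_V^2\right)^{1/2}\le\norm{\bfe}_{\bV^*}+\norm{f_f}_{V^*}+3\norm{g}_{\bV,\Gamma_c},
\end{align*}
which is the claimed bound.

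For the second estimate, observe that the hypothesis $\mathbf 0\in\bK$ means precisely that $(\mathbf 0,0)$ is admissible for the \emph{same} gap function $g$; exactly as above one checks that $(\mathbf 0,0)$ solves \eqref{VIabstract} for the datum $(\mathbf 0,0,g)$. Applying Theorem~\ref{thm:existence} now with $\widetilde{g}=g$ makes the term $3\norm{g-\widetilde{g}}_{\bV,\Gamma_c}$ vanish and yields the sharper bound. Alternatively, a fully self-contained argument is available: since $\mathbf 0\in\bK$ one may take $\bv=\mathbf 0$ in \eqref{VIabstractA} to get $a(\bu,\bu)-b(\bu,p)\le\langle\bfe,\bu\rangle$, take $q=p$ in \eqref{VIabstractB} to get $-b(\bu,p)=c(p,p)+\langle f_f,p\rangle$, add the two, and use the ellipticity of $a(\cdot,\cdot)$ and $c(\cdot,\cdot)$ together with the Cauchy--Schwarz inequality (first in the duality pairings, then in $\R^2$) to conclude.

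There is essentially no hard step here: the corollary is a straightforward specialization of Theorem~\ref{thm:existence}. The only point that needs a line of justification is that the chosen reference pair $(\mathbf 0,0)$ is genuinely the solution for the reference datum, i.e.\ that the corresponding admissible set is non-empty (which holds because it contains $\mathbf 0$) and that the variational inequality and the saddle-point equation are trivially satisfied there; uniqueness from Theorem~\ref{thm:existence} then pins it down.
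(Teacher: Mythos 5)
Your proposal is correct and follows essentially the same route as the paper: the first bound is obtained exactly as in the paper by specializing the Lipschitz stability estimate of Theorem~\ref{thm:existence} to the homogeneous datum with solution $(\mathbf 0,0)$, and your "alternative" argument for the second bound (test with $\bv=\mathbf 0$ and $q=p$, then use ellipticity and duality) is precisely the paper's proof. Your primary route for the second bound --- applying Theorem~\ref{thm:existence} with $(\widetilde{\bfe},\widetilde{f_f},\widetilde{g})=(\mathbf 0,0,g)$ so that the gap term cancels --- is a valid minor variant, since $(\mathbf 0,0)$ is indeed the solution for that datum when $\mathbf 0\in\bK$.
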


\begin{proof}
The first assertion follows directly from the Lipschitz-continuous dependency of the solution on the data, see Theorem~\ref{thm:existence}, as $(\bu,p)=(\mathbf{0},0)$ is the solution of \eqref{VIabstract} to the data $(\bfe,f_f,g) = (\mathbf{0},0,0)$.

If $\mathbf{0}\in \bK$, then choosing $\bv =\mathbf{0}$ and $q=p$ in \eqref{VIabstract} yields
\begin{align*}
    \langle \bfe,  \bu\rangle \geq a(\bu, \bu) - b(\bu,p) = a(\bu, \bu) + c(p,p) + \langle f_f, p \rangle = \norm{\bu}_{\bV}^2 + \norm{p}_V^2 +  \langle f_f, p \rangle.
\end{align*}
Duality arguments yield the second assertion.
\end{proof}

{For the a priori error analysis it is helpful to formally link the contact pressure $\bsigma_\bn$ with the residual of some variational inequality. Indeed, the definition of $D$, \eqref{biotsystemVF3A} and the no-friction condition reveal that 
\begin{align} \label{eq:contactPressure2Residuum}
    \langle D \bu - \bl, \bv \rangle = \langle \bsigma_\bn(\bu,p) , \bv_\bn \rangle_{\Gamma_c} \quad \forall \ \bv \in \bV.
\end{align}
Moreover, 
\begin{align*}
  -\bsigma_\bn(\bu,p) \in  \Lambda = \left\{ \mu \in \tilde{H}^{-1/2}(\Gamma_c) : \langle \mu,v \rangle \geq 0 \ \forall \ v \in H^{1/2}_+(\Gamma_c) \right\}
\end{align*}
where
\begin{align*}
H^{1/2}_+(\Gamma_c) &= \left\{v \in H^{1/2}(\Gamma_c):  v \geq 0 \right\}.
\end{align*}
}

\section{Generic Galerkin approximation}
\label{sec:abstractGalerkin}
Let $\bV_N \subset \bV$ and $V_M \subset V$ be two finite dimensional subsets. Furthermore, let $\bK_N \subset \bV_N$ be a non-empty, closed, convex set not necessarily a subset of $\bK$. The non-conformity of $\bK_N$ comes from a possible discretization of the non-penetration constraint $\gamma(\bv) \leq g$. 

The discrete problem is to find a pair $(\bu_N,p_M)\in \bK_N \times V_M$ such that 
\begin{subequations}\label{DiscreteVIabstract}
\begin{alignat}{2}
a(\bu_N,\bv_N - \bu_N) 
-b(\bv_N - \bu_N,p_M)
&\geq \langle \bfe, \bv_N - \bu_N \rangle
&
\quad& \forall\  \bv_N \in \bK_N \label{DiscreteVIabstractA}\\
-b(\bu_N,q_M)-c(p_M,q_M)&=\langle f_f, q_M \rangle &\quad& \forall\ q_M \in V_M  . \label{DiscreteVIabstractB}
\end{alignat}
\end{subequations}

As on the continuous level we can reduce \eqref{DiscreteVIabstract} to a variational inequality in $\bu_N$ only but the necessary steps are a bit more technical. 
The Lax-Milgram lemma implies the existence of a linear mapping $R_M:V^* \rightarrow CV_M$ such that
\begin{align}
\langle R_M f, q_M \rangle = \langle f,q_M\rangle  \quad \forall \ q_M \in V_M,\ \forall \ f \in V^*.
\end{align}
Indeed, $R_M$ projects the right hand side data onto such a set that the Galerkin solution $p_M$ of \eqref{DiscreteVIabstractB} becomes
    $p_M = -C_M^{-1} \left(B \bu_N + f_f \right)$
with $C_M^{-1}=C^{-1}R_M: V^* \rightarrow V_M$ an approximation of $C^{-1}$. Inserting 
$p_M$ into \eqref{DiscreteVIabstractA} yields the reduced variational inequality: Find $\bu_N \in \bK_N$ such that
\begin{align} \label{DiscreteReducedVI}
  \langle D_M \bu_N, \bv_N-\bu_N \rangle \geq \langle \bl_M ,\bv_N-\bu_N\rangle \quad \forall \ \bv_N \in \bK_N
\end{align}
with $D_M=A+B^\top C^{-1}_M B: \bV \rightarrow \bV^*$ and $\bl_M = \bfe - B^\top C^{-1}_M f_f \in \bV^*$. Compared to \eqref{ReducedVI} not only the set $\bK$ is discretized but also the operator $D$ and the right hand side $\bl$ themselves.

\begin{lemma} \label{lem:DM_coercive}
The discrete Operator $C^{-1}_M$ is $M$-uniformly continuous with continuity constant one.
The discrete Operator $D_M$ is symmetric, $M$-uniformly continuous and $M$-uniformly {elliptic} with continuity constant two and {ellipticity} constant one.
\end{lemma}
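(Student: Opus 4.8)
The plan is to follow the proof of Lemma~\ref{lem:D_coercive} almost verbatim, the only genuinely new ingredient being the two defining properties of $R_M$: for every $f \in V^*$ the element $C_M^{-1}f = C^{-1}R_M f$ lies in $V_M$, and $\langle R_M f, q_M\rangle = \langle f, q_M\rangle$ for all $q_M \in V_M$. Together these say that as soon as $R_M f$ (equivalently $C\,C_M^{-1}f$) is paired with something in $V_M$ it may be replaced by $f$, and this is what keeps all the constants independent of $M$.

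First I would establish the continuity of $C_M^{-1}$: for arbitrary $f \in V^*$ put $q_M := C_M^{-1}f \in V_M$ and write $\norm{C_M^{-1}f}_V^2 = \langle C\,C_M^{-1}f, C_M^{-1}f\rangle = \langle R_M f, C_M^{-1}f\rangle = \langle f, C_M^{-1}f\rangle \le \norm{f}_{V^*}\norm{C_M^{-1}f}_V$, using $C\,C^{-1}R_M f = R_M f$ in the second step and the defining identity of $R_M$ (legitimate since $C_M^{-1}f \in V_M$) in the third. Dividing gives continuity constant one, uniformly in $M$.

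Next, for $D_M = A + B^\top C_M^{-1} B$ I would treat the three claims separately. For $M$-uniform continuity, write $\langle D_M\bu,\bv\rangle = a(\bu,\bv) + \langle C_M^{-1}B\bu, B\bv\rangle$, bound the first term by $\norm{\bu}_{\bV}\norm{\bv}_{\bV}$, and bound the second by $\norm{C_M^{-1}B\bu}_V\norm{B\bv}_{V^*} \le \norm{B\bu}_{V^*}\norm{B\bv}_{V^*} \le \norm{\bu}_{\bV}\norm{\bv}_{\bV}$, using the continuity of $C_M^{-1}$ just shown together with the continuity of $B$ from \eqref{B_continuous}; this yields constant two. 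For $M$-uniform ellipticity, set $p := C_M^{-1}B\bv \in V_M$; then $\langle C_M^{-1}B\bv, B\bv\rangle = \langle B\bv, p\rangle = \langle R_M B\bv, p\rangle = \langle Cp, p\rangle = \norm{p}_V^2 \ge 0$ (the middle equality again using $p \in V_M$), so $\langle D_M\bv,\bv\rangle \ge a(\bv,\bv) = \norm{\bv}_{\bV}^2$, i.e. constant one.

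The step I expect to require the most care is the symmetry of $B^\top C_M^{-1}B$, since $C_M^{-1}$ is only a Galerkin approximation of the self-adjoint $C^{-1}$. I would set $p := C_M^{-1}B\bu \in V_M$ and $r := C_M^{-1}B\bv \in V_M$ and, using $Cr = R_M B\bv$ together with $p \in V_M$, compute $\langle C_M^{-1}B\bu, B\bv\rangle = \langle B\bv, p\rangle = \langle R_M B\bv, p\rangle = \langle Cr, p\rangle = c(p,r)$; exchanging the roles of $\bu$ and $\bv$ gives $\langle C_M^{-1}B\bv, B\bu\rangle = c(r,p)$, and symmetry of $c(\cdot,\cdot)$ closes the argument, whence symmetry of $D_M$ follows from that of $A$. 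The only thing one must watch throughout is keeping track of which argument of the duality pairing lies in $V_M$, so that the defining identity of $R_M$ is invoked legitimately on each side.
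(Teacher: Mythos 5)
Your proposal is correct and follows essentially the same route as the paper's proof: each of the four claims (continuity of $C_M^{-1}$, continuity, ellipticity and symmetry of $D_M$) is established by the same chain of identities, with the key device being the insertion or removal of $R_M$ whenever the other slot of the duality pairing lies in $V_M$. The only cosmetic difference is that you phrase the symmetry step via the symmetric bilinear form $c(\cdot,\cdot)$ applied to $p=C_M^{-1}B\bu$ and $r=C_M^{-1}B\bv$, whereas the paper uses the symmetry of the operator $C^{-1}$ directly; these are equivalent.
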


\begin{proof}
Let $f \in V^*$ be arbitrary. Then
 \begin{align*}
     \norm{C^{-1}_M f}_{V}^2 = \langle C (C^{-1}_M f), C^{-1}_M f \rangle = \langle R_M f, C^{-1}_M f \rangle = \langle f, C^{-1}_M f \rangle \leq \norm{f}_{V^*} \norm{C^{-1}_M f}_{V}
 \end{align*}
 as $C^{-1}_M f \in V_M$. Hence, the continuity constant of $C^{-1}_M$ is one.
 
 Let $\bu$, $\bv \in \bV$ be arbitrary. Then there holds
 \begin{align*}
     \langle D_M \bu,\bv \rangle &= \langle A \bu,\bv \rangle + \langle B^\top C^{-1}_M B \bu,\bv \rangle \\
    & = \langle A \bu,\bv \rangle + \langle  C^{-1}_M B \bu,B \bv \rangle \\
     &\leq \norm{\bu}_{\bV}\norm{\bv}_{\bV} + \norm{B\bu}_{V^*}\norm{B\bv}_{V^*} \\
     & \leq 2 \norm{\bu}_{\bV}\norm{\bv}_{\bV}
 \end{align*}
 which implies a continuity constant of two for $D_M$.
 
 Let $f \in V^*$ be still arbitrary. Then
 \begin{align*}
     \langle C^{-1}_M f,f \rangle =  \langle C^{-1}_M f,R_M f \rangle = \langle C^{-1} R_M f, R_M f \rangle = \norm{C^{-1}R_M f}_{V}^2 \geq 0
 \end{align*}
 as $C^{-1}_M f \in V_M$. Therewith, we find that
 \begin{align*}
     \langle D_M \bv,\bv \rangle = \langle A \bv,\bv \rangle + \langle C^{-1}_M B \bv,B\bv \rangle \geq \langle A \bv,\bv \rangle = \norm{\bv}_{\bV}^2
 \end{align*}
 for all $\bv \in \bV.$
 For the symmetry of $D_M$ we observe that $A$ and $C^{-1}$ are symmetric and $C^{-1}_M=C^{-1}R_M$ maps onto $V_M$. Thus we obtain with the definition of $R_M$ that
 \begin{align*}
     \langle D_M \bu,\bv\rangle & = \langle A\bu,\bv \rangle + \langle C^{-1}_M B \bu, B\bv \rangle \\
     &= \langle A\bu,\bv \rangle + \langle C^{-1}_M B \bu, R_M B\bv \rangle \\
     &= \langle A\bu,\bv \rangle + \langle R_M B \bu, C^{-1}R_M B\bv \rangle \\
     &= \langle A\bu,\bv \rangle + \langle R_M B \bu, C^{-1}_M B\bv \rangle\\
     &= \langle \bu,A\bv \rangle + \langle  B \bu, C^{-1}_M B\bv \rangle \\
     & =  \langle \bu,D_M \bv\rangle
 \end{align*}
 for all $\bu$, $\bv \in \bV$.
\end{proof}

\begin{theorem} \label{thm:DiscreteExistence}
  There exists exactly one solution $(\bu_N,p_M)$ to \eqref{DiscreteVIabstract}. Moreover, that solution satisfies the stability estimate
\begin{align*}
\left(\norm{\bu_N}_{\bV}^2 + \norm{p_M}_{\bV}^2 \right)^{1/2}
     &\leq 2\norm{\bfe}_{\bV^*}+\norm{f_f}_{V^*}  + 3 \inf_{\bv_N \in \bK_N} \norm{\bv_N}_{\bV}
  \end{align*}
  and if $\mathbf{0}\in \bK_N$ then even 
  \begin{align*}
\left(\norm{\bu_N}_{\bV}^2 + \norm{p_M}_{\bV}^2 \right)^{1/2}
     &\leq \norm{\bfe}_{\bV^*}+\norm{f_f}_{V^*}  .
  \end{align*}
\end{theorem}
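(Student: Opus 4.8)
The plan is to transplant the continuous development of Section~\ref{sec:well-posedness} to the discrete, $M$-uniform setting. First I would settle existence and uniqueness via the reduced inequality \eqref{DiscreteReducedVI}. By Lemma~\ref{lem:DM_coercive} the operator $D_M$ is symmetric, $M$-uniformly continuous and $M$-uniformly elliptic, and $\bl_M=\bfe-B^\top C^{-1}_M f_f\in\bV^*$; since $\bK_N$ is non-empty, closed and convex, the Stampacchia theorem yields a unique $\bu_N\in\bK_N$ solving \eqref{DiscreteReducedVI}. For this $\bu_N$ the Lax-Milgram lemma on the finite-dimensional space $V_M$ — equivalently, the maps $R_M$ and $C^{-1}_M$ introduced above — produces the unique $p_M=-C^{-1}_M(B\bu_N+f_f)\in V_M$ satisfying \eqref{DiscreteVIabstractB}. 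As the reduction \eqref{DiscreteReducedVI} together with this formula for $p_M$ is equivalent to \eqref{DiscreteVIabstract}, the pair $(\bu_N,p_M)$ is the unique solution.

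Next I would prove the stability estimate in the easier case $\mathbf{0}\in\bK_N$, in exact analogy with the second part of Corollary~\ref{cor:aprioriEst}: taking $\bv_N=\mathbf{0}$ in \eqref{DiscreteVIabstractA} and $q_M=p_M$ in \eqref{DiscreteVIabstractB} and combining the two relations to remove the mixed term, and using that $a(\cdot,\cdot)$ and $c(\cdot,\cdot)$ induce the norms $\norm{\cdot}_{\bV}$ and $\norm{\cdot}_V$, one is left with $\norm{\bu_N}_{\bV}^2+\norm{p_M}_V^2\le\langle\bfe,\bu_N\rangle-\langle f_f,p_M\rangle$. A duality estimate and Cauchy--Schwarz in $\R^2$ bound the right-hand side by $(\norm{\bfe}_{\bV^*}^2+\norm{f_f}_{V^*}^2)^{1/2}(\norm{\bu_N}_{\bV}^2+\norm{p_M}_V^2)^{1/2}$, and dividing through gives $(\norm{\bu_N}_{\bV}^2+\norm{p_M}_V^2)^{1/2}\le\norm{\bfe}_{\bV^*}+\norm{f_f}_{V^*}$.

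For the general estimate the situation differs from Theorem~\ref{thm:existence}: here $\bK_N$ need neither contain $\mathbf{0}$ nor be an affine translate of a cone, so the lifting splitting $\bK=\bG+\bK_0$ used there is unavailable. Instead I would fix an arbitrary $\bw_N\in\bK_N$, test \eqref{DiscreteVIabstractA} with $\bv_N=\bw_N$ and \eqref{DiscreteVIabstractB} with $q_M=p_M$, and substitute the latter into the former to eliminate $b(\bu_N,p_M)$. Collecting $\norm{\bu_N}_{\bV}^2+\norm{p_M}_V^2$ on the left and estimating the leftover terms $a(\bu_N,\bw_N)$, $b(\bw_N,p_M)$, $\langle\bfe,\bu_N-\bw_N\rangle$ and $\langle f_f,p_M\rangle$ by the unit continuity constants of $a(\cdot,\cdot)$ and $b(\cdot,\cdot)$ (the latter from \eqref{B_continuous}) and by duality, one obtains
\begin{align*}
  \norm{\bu_N}_{\bV}^2+\norm{p_M}_V^2 &\le \bigl(\norm{\bw_N}_{\bV}+\norm{\bfe}_{\bV^*}\bigr)\norm{\bu_N}_{\bV}+\bigl(\norm{\bw_N}_{\bV}+\norm{f_f}_{V^*}\bigr)\norm{p_M}_V \\
  &\quad +\norm{\bfe}_{\bV^*}\norm{\bw_N}_{\bV}.
\end{align*}
The first two terms are absorbed by the same Cauchy--Schwarz/quadratic-inequality step as above; the last term has no counterpart in the continuous estimate and is absorbed by Young's inequality, after which passing to $\inf_{\bw_N\in\bK_N}\norm{\bw_N}_{\bV}$ delivers the claimed bound with the (deliberately non-sharp) constants $2$ and $3$.

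I expect the third paragraph to be the only genuine obstacle: treating the non-conforming, structureless set $\bK_N$ without a lifting decomposition forces the direct test-function computation and produces the extra, unpaired term $\norm{\bfe}_{\bV^*}\norm{\bw_N}_{\bV}$ that is responsible for the coarser constants $2$ and $3$. Everything else is a routine transcription of Lemma~\ref{lem:D_coercive}, Theorem~\ref{thm:existence} and Corollary~\ref{cor:aprioriEst}, with $D$, $C^{-1}$, $\bl$ replaced throughout by $D_M$, $C^{-1}_M$, $\bl_M$.
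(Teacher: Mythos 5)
Your proposal is correct and follows essentially the same route as the paper: existence/uniqueness and the $\mathbf{0}\in\bK_N$ bound are obtained by transcribing Theorem~\ref{thm:existence} and Corollary~\ref{cor:aprioriEst} with $D_M$, $C^{-1}_M$, $\bl_M$ in place of $D$, $C^{-1}$, $\bl$ (justified by Lemma~\ref{lem:DM_coercive}), and the general stability estimate is derived by testing \eqref{DiscreteVIabstractA} with an arbitrary $\bv_N\in\bK_N$ and \eqref{DiscreteVIabstractB} with $q_M=p_M$, yielding exactly the paper's intermediate inequality with the unpaired term $\norm{\bfe}_{\bV^*}\norm{\bv_N}_{\bV}$ and then the constants $2$ and $3$ via Young's inequality and subadditivity of the square root. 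Your observation that the lifting decomposition of Theorem~\ref{thm:existence} is unavailable here is precisely why the paper, too, resorts to the direct computation.
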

\begin{proof}
  Due to Lemma~\ref{lem:DM_coercive} the existence and uniqueness proof as well as the second stability estimate follow analogously to Theorem~\ref{thm:existence} and Corollary~\ref{cor:aprioriEst}.
  For the first stability estimate we find with \eqref{DiscreteVIabstract} and duality arguments that
  \begin{align*}
      \norm{\bu_N}_{\bV}^2 + \norm{p_M}_{\bV}^2 &= a(\bu_N,\bu_N) + c(p_M,p_M) \\
      &= a(\bu_N,\bu_N) - b(\bu_N,p_M) - \langle f_f,p_M \rangle \\
      &\leq \langle \bfe, \bu_N-\bv_N \rangle +a(\bu_N,\bv_N)-b(\bv_N,p_M)- \langle f_f,p_M \rangle \\
      & \leq \norm{\bu_N}_{\bV} \left( \norm{\bfe}_{\bV^*} +\norm{\bv_N}_{\bV} \right) +\norm{\bfe}_{\bV^*}\norm{\bv_N}_{\bV} \\
      & \quad + \norm{p_M}_{\bV} \left( \norm{f_f}_{\bV^*} + \norm{\bv_N}_{\bV} \right).
  \end{align*}
  Young's inequality and the subadditivity of the square root now yield
  \begin{align*}
      \left(\norm{\bu_N}_{\bV}^2 + \norm{p_M}_{\bV}^2\right)^{1/2} &\leq \left[\left( \norm{\bfe}_{\bV^*} +\norm{\bv_N}_{\bV} \right)^2 +\norm{\bfe}_{\bV^*}^2 + \norm{\bv_N}_{\bV}^2 \right. \\
      & \left.\quad +\left( \norm{f_f}_{\bV^*} + \norm{\bv_N}_{\bV} \right)^2\right]^{1/2} \\
      & \leq 2\norm{\bfe}_{\bV^*} +  \norm{f_f}_{\bV^*}  + 3 \norm{\bv_N}_{\bV}
  \end{align*}
  for any $\bv_N \in \bK_N$.
\end{proof}

We remark that a stability estimate with respect to the gap function depends explicitly on the way the constraint in $\bK$ is discretized for $\bK_N$. As that is not specified here we get $3 \inf_{\bv_N \in \bK_N} \norm{\bv_N}_{\bV}$ and an additional $\norm{\bfe}_{\bV^*}$ compared to the a priori estimate of Corollary~\ref{cor:aprioriEst}.

{
In \cite{Banz2022OptimalControl} a combination of the first Strang-Lemma and Falk-Theorem has been proven to obtain an a priori error estimate in the context of an optimal control problem. That idea can be applied here to our reduced problems \eqref{ReducedVI} and \eqref{DiscreteReducedVI} as well.}

\begin{lemma} \label{lem:pre_apriori_error_u}
There holds
\begin{align*}
    \norm{\bu - \bu_N}_{\bV}^2 &\leq 18 \norm{\bu - \bv_N}_{\bV}^2 + 4 \langle D\bu - \bl, \bv_N-\bu + \bv - \bu_N \rangle \\
    & \quad + 4 \norm{(D-D_M)\bv_N - (\bl-\bl_M)}_{\bV^*}^2
\end{align*}
for all $\bv \in \bK $ and all $\bv_N \in \bK_N$.
\end{lemma}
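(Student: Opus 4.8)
The plan is to combine an elliptic-variational-inequality estimate for the reduced problem \eqref{ReducedVI} (a Falk-type bound) with a Strang-type treatment of the perturbation $D \to D_M$, $\bl \to \bl_M$. First I would start from $\bV$-ellipticity of $D$ with constant one (Lemma~\ref{lem:D_coercive}): for any $\bv_N \in \bK_N$,
\begin{align*}
\norm{\bu - \bu_N}_{\bV}^2 \leq \langle D(\bu-\bu_N), \bu - \bu_N \rangle = \langle D(\bu-\bu_N), \bu - \bv_N \rangle + \langle D(\bu-\bu_N), \bv_N - \bu_N \rangle.
\end{align*}
The first term is bounded by continuity (constant two) and Young's inequality: $\langle D(\bu-\bu_N),\bu-\bv_N\rangle \leq 2\norm{\bu-\bu_N}_{\bV}\norm{\bu-\bv_N}_{\bV} \leq \tfrac14 \norm{\bu-\bu_N}_{\bV}^2 + 4\norm{\bu-\bv_N}_{\bV}^2$ (or whatever splitting ultimately produces the stated constants).

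For the second term I would write $\langle D(\bu-\bu_N),\bv_N-\bu_N\rangle = \langle D\bu - \bl, \bv_N - \bu_N\rangle + \langle \bl - D\bu_N, \bv_N - \bu_N\rangle$ and insert $\pm \bv$ and $\pm \bl_M, \pm D_M\bu_N$ appropriately. The key continuous variational inequality \eqref{ReducedVI} tested with $\bv \in \bK$ gives $\langle D\bu - \bl, \bv - \bu\rangle \geq 0$, i.e.\ $\langle \bl - D\bu, \bv - \bu_N\rangle \leq \langle D\bu - \bl, \bv_N - \bu\rangle$ after regrouping, and the discrete inequality \eqref{DiscreteReducedVI} tested with the admissible $\bv_N \in \bK_N$ gives $\langle D_M\bu_N - \bl_M, \bv_N - \bu_N\rangle \geq 0$. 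Adding these and bundling the consistency defect $\langle (D-D_M)\bu_N - (\bl-\bl_M), \bv_N - \bu_N\rangle$ — which one rewrites using $D_M\bu_N = D_M\bv_N + D_M(\bu_N-\bv_N)$ and the $M$-uniform continuity/ellipticity of $D_M$ from Lemma~\ref{lem:DM_coercive} so that the $\bu_N$-dependence in $(D-D_M)(\cdot)$ is transferred onto $\bv_N$ at the price of absorbable $\norm{\bu-\bu_N}_{\bV}^2$ and $\norm{\bu-\bv_N}_{\bV}^2$ terms — produces the term $4\langle D\bu - \bl, \bv_N - \bu + \bv - \bu_N\rangle$ and the consistency term $4\norm{(D-D_M)\bv_N - (\bl-\bl_M)}_{\bV^*}^2$, again using $\norm{\cdot}_{\bV^*}$-duality and Young's inequality to split products into a quarter of $\norm{\bu-\bu_N}_{\bV}^2$ plus the explicit bound.

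The bookkeeping obstacle is ensuring the numerical constants collapse exactly to $18$, $4$, $4$: one has to be careful about how many times Young's inequality is applied and with which weights, since each application of $2xy \leq \varepsilon x^2 + \varepsilon^{-1} y^2$ on a term containing $\norm{\bu-\bu_N}_{\bV}$ eats a fixed share of the coercivity budget $\norm{\bu-\bu_N}_{\bV}^2$. The cleanest route is to first collect all terms of the form $c_i \norm{\bu-\bu_N}_{\bV}\cdot(\text{good quantity})_i$, then apply Young's inequality once per term with weight chosen so the sum of the absorbed coefficients is $\tfrac12$, leaving $\tfrac12\norm{\bu-\bu_N}_{\bV}^2$ on the left to move over; the conjugate contributions then assemble into the stated right-hand side. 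The genuinely conceptual point — and the only non-mechanical step — is the transfer of $\bu_N$ to $\bv_N$ inside the perturbation term $(D-D_M)\bu_N$: one must exploit that $D-D_M = B^\top(C^{-1}-C_M^{-1})B$ together with the uniform boundedness of $C^{-1}-C_M^{-1}$ (both $C^{-1}$ and $C_M^{-1}$ have continuity constant one, so their difference has continuity constant at most two) to bound $\norm{(D-D_M)(\bu_N-\bv_N)}_{\bV^*} \lesssim \norm{\bu_N-\bv_N}_{\bV} \leq \norm{\bu-\bu_N}_{\bV} + \norm{\bu-\bv_N}_{\bV}$, so that this part is fully absorbable and only $(D-D_M)\bv_N - (\bl-\bl_M)$ survives as the irreducible consistency measure.
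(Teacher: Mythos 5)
Your overall strategy (Falk-type argument plus a Strang-type treatment of $D\to D_M$, $\bl\to\bl_M$) matches the paper's intent, but the step you yourself flag as the only non-mechanical one --- transferring $\bu_N$ to $\bv_N$ inside $(D-D_M)\bu_N$ --- does not close as you describe it. Starting from $\norm{\bu-\bu_N}_{\bV}^2\le\langle D(\bu-\bu_N),\bu-\bu_N\rangle$ forces you to replace $D\bu_N$ by $D_M\bu_N$ before the discrete inequality \eqref{DiscreteReducedVI} can be invoked, and after splitting $(D-D_M)\bu_N=(D-D_M)\bv_N+(D-D_M)(\bu_N-\bv_N)$ you are left with the term
\begin{align*}
\langle (D-D_M)(\bu_N-\bv_N),\,\bu_N-\bv_N\rangle \;=\; \norm{(C^{-1}-C^{-1}_M)B(\bu_N-\bv_N)}_{V}^2 \;\ge\; 0,
\end{align*}
a quadratic form with the \emph{unfavourable} sign ($D-D_M=B^\top(C^{-1}-C^{-1}_M)B$ is positive semidefinite because $(C^{-1}-C^{-1}_M)f$ is a Galerkin error and $\langle (C^{-1}-C^{-1}_M)f,f\rangle=\norm{(C^{-1}-C^{-1}_M)f}_V^2$). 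Its only available estimate is $c\,\norm{\bu_N-\bv_N}_{\bV}^2$ with $c$ of order one and no tunable small parameter, and since $\norm{\bu_N-\bv_N}_{\bV}^2\le(1+\epsilon)\norm{\bu-\bu_N}_{\bV}^2+(1+\epsilon^{-1})\norm{\bu-\bv_N}_{\bV}^2$, this single term already injects more than one full unit of $\norm{\bu-\bu_N}_{\bV}^2$ into the right-hand side, exceeding the entire coercivity budget (the ellipticity constant of $D$ is exactly one). So the claim that this part is ``fully absorbable'' is false; no choice of Young weights rescues it, and neither the constants $18,4,4$ nor any finite constants can be reached along this route.

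The fix is structural and is exactly what the paper does: apply ellipticity not to $\bu-\bu_N$ with $D$ but to the purely discrete difference $\bu_N-\bv_N$ with $D_M$, i.e.\ start from $\norm{\bu_N-\bv_N}_{\bV}^2\le\langle D_M(\bu_N-\bv_N),\bu_N-\bv_N\rangle$. Then \eqref{DiscreteReducedVI} applies directly to $\langle D_M\bu_N,\bu_N-\bv_N\rangle$ with no operator swap, the consistency defect appears from the outset only as $(D-D_M)\bv_N-(\bl-\bl_M)$ acting on $\bu_N-\bv_N$ --- a duality product that Young's inequality genuinely absorbs into $\frac{1}{4}\norm{\bu_N-\bv_N}_{\bV}^2$ --- and the continuous inequality \eqref{ReducedVI} supplies the nonnegative term $\langle D\bu-\bl,\bv-\bu\rangle$ that regroups into $\langle D\bu-\bl,\bv_N-\bu+\bv-\bu_N\rangle$ plus $\langle D(\bv_N-\bu),\bv_N-\bu_N\rangle$. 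One closes the estimate in $\norm{\bu_N-\bv_N}_{\bV}$ and only at the very end converts via $\norm{\bu-\bu_N}_{\bV}^2\le2\norm{\bu-\bv_N}_{\bV}^2+2\norm{\bu_N-\bv_N}_{\bV}^2$, which is where $18=2+2\cdot 8$ comes from. Your handling of the two variational inequalities and of the residual term $\langle D\bu-\bl,\cdot\rangle$ is otherwise consistent with the paper's argument.
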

\begin{proof}
{
From Lemma~\ref{lem:DM_coercive}, Lemma~\ref{lem:D_coercive}, the two variational inequalities \eqref{ReducedVI} and \eqref{DiscreteReducedVI}, duality arguments and Young's inequality we obtain
\begin{align*}
    \norm{\bu_N - \bv_N}_{\bV}^2 & \leq \langle D_M (\bu_N - \bv_N) ,\bu_N - \bv_N \rangle \\
    & \leq \langle \bl_M - D_M \bv_N,\bu_N - \bv_N \rangle + \langle D \bu - \bl,\bv -\bu \rangle \\
    & = \langle D\bu - \bl,\bv_N - \bu + \bv - \bu_N \rangle + \langle D (\bv_N-\bu),\bv_N -\bu_N \rangle \\
    & \quad + \langle (D-D_M)\bv_N - (\bl-\bl_M),\bu_N-\bv_N\rangle \\
    & \leq \langle D\bu - \bl,\bv_N - \bu + \bv - \bu_N \rangle + 4\norm{\bu - \bv_N}_{\bV}^2 + \frac{1}{2}\norm{\bu_N - \bv_N}_{\bV}^2 \\
    & \quad  + \norm{(D-D_M)\bv_N - (\bl-\bl_M)}_{\bV^*}^2 .
\end{align*}
Hence,
\begin{align*}
    \norm{\bu_N - \bv_N}_{\bV}^2 & \leq 2\langle D\bu - \bl,\bv_N - \bu + \bv - \bu_N \rangle + 8\norm{\bu - \bv_N}_{\bV}^2  \\
    & \quad  + 2\norm{(D-D_M)\bv_N - (\bl-\bl_M)}_{\bV^*}^2 .
\end{align*}
Now, triangle inequality, namely $\norm{\bu - \bu_N}_{\bV}^2 \leq 2\norm{\bu - \bv_N}_{\bV}^2 + 2\norm{\bu_N - \bv_N}_{\bV}^2$, yields the assertion.}
\end{proof}

For a complete a priori error estimate we need to estimate the discretization error of the left and right hand side, namely $\norm{(D-D_M)\bv_N - (\bl-\bl_M)}_{\bV^*}$, further and {to} extend the error estimate to also bound the error in $p$.
\begin{theorem} \label{thm:StrangFalk}
  There holds
  \begin{align*}
    \norm{\bu - \bu_N}_{\bV}^2 &\leq 42 \norm{\bu - \bv_N}_{\bV}^2 + 12\norm{p-q_M}_{V}^2 \\
    & \quad + 4 \langle \bsigma_\bn(\bu,p) , (\bv_N-\bu + \bv - \bu_N)\cdot \bn \rangle_{\Gamma_c} \\
    \norm{p-p_M}_{V}^2 &  \leq  126 \norm{\bu - \bv_N}_{\bV}^2 + 39 \norm{p-q_M}_{V}^2 \\
        & \quad + 12 \langle \bsigma_\bn(\bu,p) , (\bv_N-\bu + \bv - \bu_N)\cdot \bn \rangle_{\Gamma_c}
\end{align*}
for all $\bv \in \bK $, all $\bv_N \in \bK_N$ and all $q_M \in V_M$.
\end{theorem}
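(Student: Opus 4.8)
The plan is to bootstrap Theorem~\ref{thm:StrangFalk} from Lemma~\ref{lem:pre_apriori_error_u} by first controlling the operator-consistency term $\norm{(D-D_M)\bv_N - (\bl-\bl_M)}_{\bV^*}$ and then converting $\langle D\bu-\bl,\cdot\rangle$ into the contact-pressure form via \eqref{eq:contactPressure2Residuum}. For the consistency term, I would write $(D-D_M)\bv_N - (\bl-\bl_M) = B^\top(C^{-1}-C_M^{-1})(B\bv_N + f_f)$ and recognize $C_M^{-1}(B\bv_N+f_f) = -q_M'$ where $q_M'$ is essentially the discrete pressure associated with $\bv_N$, while $C^{-1}(B\bv_N+f_f) = -p'$ is the continuous one. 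Since $B^\top$ is continuous with constant one (by \eqref{B_continuous}) and $C^{-1}-C_M^{-1}$ applied to data is, by Lemma~\ref{lem:DM_coercive} and a Galerkin-orthogonality/Cea-type argument, bounded by the best approximation of $p'$ in $V_M$, this term is controlled by $\inf_{q_M\in V_M}\norm{p'-q_M}_V$. One then relates $p'$ to the true pressure $p = -C^{-1}(B\bu+f_f)$ using $\norm{p-p'}_V = \norm{C^{-1}B(\bu-\bv_N)}_V \leq \norm{\bu-\bv_N}_{\bV}$, so that $\norm{(D-D_M)\bv_N - (\bl-\bl_M)}_{\bV^*} \lesssim \norm{\bu-\bv_N}_{\bV} + \norm{p-q_M}_V$ for any $q_M\in V_M$.

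Next I would substitute the residual identity $\langle D\bu-\bl,\bw\rangle = \langle\bsigma_\bn(\bu,p),\bw_\bn\rangle_{\Gamma_c}$ from \eqref{eq:contactPressure2Residuum} into Lemma~\ref{lem:pre_apriori_error_u}, combine with the consistency bound, and collect constants to obtain the first inequality of the theorem. For the pressure estimate, I would start from $\norm{p-p_M}_V^2 \leq 2\norm{p-q_M}_V^2 + 2\norm{q_M-p_M}_V^2$ and note that $q_M-p_M \in V_M$, so by the definition of $C_M^{-1}$ and $R_M$ we can test \eqref{DiscreteVIabstractB} and the relation $p_M = -C_M^{-1}(B\bu_N+f_f)$ against $q_M-p_M$; this yields $\norm{q_M-p_M}_V^2 \lesssim \norm{B(\bu-\bu_N)}_{V^*}\norm{q_M-p_M}_V + (\text{best-approx terms})$, hence $\norm{q_M-p_M}_V \lesssim \norm{\bu-\bu_N}_{\bV} + \norm{p-q_M}_V$. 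Plugging in the just-proved bound on $\norm{\bu-\bu_N}_{\bV}^2$ and re-collecting constants gives the stated second inequality.

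The main obstacle I expect is the careful bookkeeping of the operator-consistency term: making precise that $C_M^{-1}$ acting on $B\bv_N+f_f$ is the Galerkin projection of the continuous pressure $p'$ and that the error $\norm{(C^{-1}-C_M^{-1})(B\bv_N+f_f)}_V$ is exactly a best-approximation quantity requires unwinding the definition $C_M^{-1}=C^{-1}R_M$ and exploiting that $C$ is an inner product (so the Galerkin solution is the $\norm{\cdot}_V$-orthogonal projection onto $V_M$). One must be careful that the best approximation is of $p'$, not of $p$, and pay the extra $\norm{\bu-\bv_N}_{\bV}$ to pass between them; this is where the somewhat large numerical constants ($42$, $126$, etc.) accumulate through repeated use of Young's inequality and the triangle inequality. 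The remaining steps — substituting the residual identity and the final pressure triangle-inequality argument — are routine once the consistency estimate is in place.
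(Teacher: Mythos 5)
Your proposal follows essentially the same route as the paper: Lemma~\ref{lem:pre_apriori_error_u}, a bound on the consistency term $\norm{(D-D_M)\bv_N-(\bl-\bl_M)}_{\bV^*}$ in terms of pressure best approximation, the residual identity \eqref{eq:contactPressure2Residuum}, and Galerkin orthogonality in $V_M$ for the pressure error. Your handling of the consistency term --- identifying $C_M^{-1}(B\bv_N+f_f)$ as the $c$-orthogonal projection of the auxiliary pressure $p'=-C^{-1}(B\bv_N+f_f)$ and paying $\norm{p-p'}_V\leq\norm{\bu-\bv_N}_{\bV}$ to pass to $p$ --- is correct and in fact slightly sharper than the paper's three-term splitting via $G_M=C_M^{-1}C$, so the first inequality follows with room to spare. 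The one point to fix is the second inequality: starting from $\norm{p-p_M}_V^2\leq 2\norm{p-q_M}_V^2+2\norm{q_M-p_M}_V^2$ together with $\norm{q_M-p_M}_V\leq\norm{p-q_M}_V+\norm{\bu-\bu_N}_{\bV}$ gives $\norm{p-p_M}_V^2\leq 4\norm{\bu-\bu_N}_{\bV}^2+6\norm{p-q_M}_V^2$, hence a coefficient $4\cdot 4=16$ on the contact-pressure term; since that term has no a priori sign (neither $\bv_N\cdot\bn\leq g$ nor $\bu_N\cdot\bn\leq g$ is guaranteed), a bound with $16$ does not imply the stated one with $12$. The paper instead writes $\norm{p-p_M}_V^2=c(p-p_M,p-q_M)-b(\bu-\bu_N,q_M-p_M)$ directly from the orthogonality obtained by subtracting \eqref{VIabstractB} from \eqref{DiscreteVIabstractB}, and absorbs $\frac{1}{2}\norm{p-p_M}_V^2$ by Young's inequality to get $\norm{p-p_M}_V^2\leq 3\norm{\bu-\bu_N}_{\bV}^2+3\norm{p-q_M}_V^2$, which produces exactly the factors $126$, $39$ and $12$.
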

\begin{proof}
Using \eqref{eq:contactPressure2Residuum} we have
\begin{align*}
  \langle D\bu - \bl, \bv_N-\bu + \bv - \bu_N \rangle = \langle \bsigma_\bn(\bu,p) , (\bv_N-\bu + \bv - \bu_N)\cdot \bn \rangle_{\Gamma_c}.
\end{align*}
  Let $G_M = C^{-1}_MC$ be the Galerkin projection operator. With \eqref{SolutionP} we obtain
  \begin{align*}
  -\left(\bl - \bl_M\right) &= B^\top C^{-1} f_f - B^\top C^{-1}_M f_f  \\
  & =  B^\top\left(-p - C^{-1}B \bu\right) - B^\top C^{-1}_M \left(-Cp - B \bu \right)  \\
  &= B^\top \left(G_M-I\right) p + B^\top \left(C^{-1}_M-C^{-1} \right) B \bu.
  \end{align*}
  Hence,
  \begin{align*}
      (D-D_M)\bv_N - (\bl-\bl_M) &= B^\top \left(C^{-1} -C^{-1}_M \right)B \bv_N  - (\bl-\bl_M) \\
      & = B^\top \left(C^{-1} -C^{-1}_M \right)B \left(\bv_N - \bu \right) + B^\top \left(G_M-I\right) p.
  \end{align*}
  The continuity of $B$, $B^\top$, $C^{-1}$ (Lemma~\ref{lem:D_coercive}) and $C^{-1}_M$ (Lemma~\ref{lem:DM_coercive}), and the triangle inequality as well as Young's inequality now yield
  \begin{align*}
      &4\norm{(D-D_M)\bv_N - (\bl-\bl_M)}_{\bV^*}^2 \\
      & \quad \leq 4\norm{ \left(C^{-1} -C^{-1}_M \right)B \left(\bv_N - \bu \right) + \left(G_M-I\right) p}_{V}^2  \\
      & \quad \leq 12 \left(\norm{  C^{-1} B \left(\bv_N - \bu \right) }_{V}^2 + \norm{  C^{-1}_M B \left(\bv_N - \bu \right)  }_{V}^2 + \norm{\left(I-G_M\right) p}_{V}^2 \right) \\
      & \quad \leq  24\norm{ B \left(\bv_N - \bu \right) }_{V^*}^2  + 12 \norm{\left(I-G_M\right) p}_{V}^2   \\
      & \quad \leq 24 \norm{ \bu - \bv_N }_{\bV}^2  + 12 \norm{\left(I-G_M\right) p}_{V}^2.
  \end{align*}
  Since $\left(I-G_M\right) p$ is the Galerkin projection error of $p$ onto $V_M$, C\'ea's lemma implies with $c(\cdot,\cdot)$ inducing the norm $\norm{\cdot}_{V}$ that
  \begin{align*}
      12 \norm{\left(I-G_M\right) p}_{V}^2 = 12 \inf_{q_M \in V_M} \norm{p-q_M}_{V}^2.
  \end{align*}
Now, the first assertion follows with Lemma~\ref{lem:pre_apriori_error_u}.
  
  For the second assertion we note that subtracting \eqref{VIabstractB} from \eqref{DiscreteVIabstractB} yields 
  \begin{align*}
      c(p-p_M,q_M) = - b(\bu-\bu_M,q_M) \quad \forall \ q_M \in V_M.
  \end{align*}
  Hence,
  \begin{align*}
      \norm{p-p_M}_{V}^2 &= c(p-p_M,p-p_M) \\
      &= c(p-p_M,p-q_M) - b(\bu-\bu_M,q_M-p_M) \\
      & \leq \norm{p-p_M}_{V}\norm{p-q_M}_{V} + \norm{\bu-\bu_M}_{\bV}\norm{q_M-p_M}_{V} \\
      & \leq \norm{p-p_M}_{V}\norm{p-q_M}_{V} + \norm{\bu-\bu_M}_{\bV}\left(\norm{p-q_M}_{V} + \norm{p-p_M}_{V} \right) \\
      & \leq \frac{1}{2}\norm{p-p_M}_{V}^2   +\frac{3}{2}\norm{\bu-\bu_M}_{\bV}^2 + \frac{3}{2} \norm{p-q_M}_{V}^2.
  \end{align*}
  Inserting the first assertion into the above estimate yields
  \begin{align*}
        \norm{p-p_M}_{V}^2 & \leq 3 \norm{\bu-\bu_M}_{\bV}^2 + 3 \norm{p-q_M}_{V}^2 \\
        & \leq  126 \norm{\bu - \bv_N}_{\bV}^2 + 39 \norm{p-q_M}_{V}^2 \\
        & \quad + 12 \langle \bsigma_\bn(\bu,p) , (\bv_N-\bu + \bv - \bu_N)\cdot \bn \rangle_{\Gamma_c} .
  \end{align*}
\end{proof}

As usual for contact problems the a priori error estimate consists of some best approximation terms squared and some linear terms measuring among others the non-conformity of the discretiztion $\bK_N \not\subset \bK$. We emphasize that the above error estimate is {quasi-}optimal in $p$.

\section{\texorpdfstring{A $hp$-finite element discretization}{A hp-finite element discretization}} 
\label{sec:hp_discretization}

Let $\mathcal{T}_h$ and $\mathcal{T}_k$ be two independent locally quasi-uniform finite element meshes of $\Omega$ consisting of quadrilaterals or hexahedrons. These meshes induce the set of element edges/faces $\mathcal{E}_h$ and $\mathcal{E}_k$ which are assumed to respect the boundary decomposition into $\Gamma_d$, $\Gamma_t$, $\Gamma_c$, or into $\Gamma_p$, $\Gamma_f$, respectively. 
The diameter of an element $T \in \mathcal{T}_h$ is $h_T$ and its polynomial degree is $r_T \geq 1$. $k_T$ and $s_T \geq 1$ for $T \in \mathcal{T}_k$ is the local element size, polynomial degree, respectively, for the other mesh $\mathcal{T}_k$. The polynomial degree of neighboring elements is assumed to be comparable in the usual sense. We denote by $h$, $k$, $r$ and $s$ the vectors of $(h_T)_{T \in \mathcal{T}_h}$, $(k_T)_{T \in \mathcal{T}_k}$, $(r_T)_{T \in \mathcal{T}_h}$ and $(s_T)_{T \in \mathcal{T}_k}$ and the global mesh size and global polynomial degree
$$ h = \max_{T \in \mathcal{T}_h} h_T, \quad k = \max_{T \in \mathcal{T}_k} k_T, \quad 
r = \min_{T \in \mathcal{T}_h} r_T, \quad 
s = \min_{T \in \mathcal{T}_k} s_T
$$
alike. Its meaning is always clear form the context.
With each element $T\in \mathcal{T}_h$ and $T\in \mathcal{T}_k$ we associate a (bi/tri)-linear bijective mapping $\mathcal{F}_T$ from the reference element $\hat{T} =[-1,1]^d$ onto the physical element $T$.
The standard $hp$-finite element spaces are
\begin{align*}
    V_{ks} &= \left\{ v_{ks} \in V : v_{ks}|_T \circ \mathcal{F}_T \in \mathbb{P}_{s_T}(\hat{T}) \ \forall \ T \in \mathcal{T}_k \right\} \\
    \bV_{hr} &= \left\{ \bv_{hr} \in \bV : \bv_{hr}|_T \circ \mathcal{F}_T \in [\mathbb{P}_{r_T}(\hat{T})]^d \ \forall \ T \in \mathcal{T}_h \right\}
\end{align*}
where $\mathbb{P}_p(\hat{T}) = \operatorname{span}\{x_1^{i} x_2^{j}: 0 \leq i,j \leq p\}$ or $\mathbb{P}_p(\hat{T}) = \operatorname{span}\{x_1^{i} x_2^{j}x_3^{k}: 0 \leq i,j,k \leq p\}$ depending on the dimension $d \in \{2,3\}$.
Every edge/face $e$ in these meshes is a straight line or quadrilateral and thus there exists an affine/bilinear mapping $\mathcal{F}_e:[-1,1]^{d-1} \rightarrow e$ which is just the restriction of $\mathcal{F}_T$ onto the right local edge/face. The set of edges/faces $e$ lying on the contact boundary $\Gamma_c$ is denoted by $\mathcal{E}_{h,\Gamma_c}$. 
Let $r_e$, $s_e$ be the polynomial degree on the edge/face $e \in \mathcal{E}_h$, $e \in \mathcal{E}_k$, respectively, determined by the minimum rule. We emphasize that $r_e = r_T$ for $e \in \mathcal{E}_{h,\Gamma_c}$. We denote by $\{\hat{x}_{i,e}\}_{i=1,\ldots,n_e} \subset [-1,1]^{d-1}$ with $n_e = (r_e+1)^{d-1}$ the (tensor product) Gauss-Lobatto quadrature points.
Therewith,
\begin{align*}
    \bK_{hr} = \left\{ \bv_{hr} \in \bV_{hr}: (\bv_{hr}|_e\cdot \bn) \circ \mathcal{F}_e (\hat{x}_{i,e}) \leq g(\mathcal{F}_e (\hat{x}_{i,e})) \ \forall \; 1 \leq i \leq n_e ,\  e \in \mathcal{E}_{h,\Gamma_c}  \right\}.
\end{align*}
{If $g\in C^0(\Gamma_c)$ then $\bK_{hr}$ is well defined, and a closed, convex subset of $\bV_{hr}$. As $g$ is also sufficiently large, e.g.~$g \geq 0$, by previous assumptions we have that $\bK_{hr}$ is non-empty.}

Let $\mathcal{I}_{ks}:V \rightarrow V_{ks}$ be the $H^1(\Omega)$-projection for which there holds by classical interpolation estimates and real interpolation of Sobolev spaces
\begin{align} \label{eq:interpol_error_est_I}
       \|v - \mathcal{I}_{ks} v \|_{H^1(\Omega)} & \leq C_I \frac{k^{\min\{s,\beta_p-1\}}}{s^{\beta_p-1}} |v|_{H^{\beta_p}(\Omega)} 
\end{align}
for any $\beta_p \geq 1$. We denote by $\mathcal{J}_{hr}:\bH^{\beta_u}(\Omega) \cap \bV \rightarrow \bV_{hr}$ with $\beta_u>(d+1)/2$ the (componentwise applied) tensor product based Gauss-Lobatto nodal interpolation {operator} from \cite{bernardi1992polynomial}. Then there holds by scaling and \cite{bernardi1992polynomial} that
\begin{align} \label{eq:interpol_error_est_J}
     \frac{r}{h}\|\bv - \mathcal{J}_{hr} \bv \|_{\bL^2(\Omega)} + \|\bv - \mathcal{J}_{hr} \bv \|_{\bH^1(\Omega)} & \leq C_J \frac{h^{\min\{r,\beta_u-1\}}}{r^{\beta_u-1}} |\bv|_{\bH^{\beta_u}(\Omega)}.
\end{align}
We emphasize that $\mathcal{J}_{hr}:\bH^{\beta_u}(\Omega) \cap \bK \rightarrow \bK_{hr}$ by the construction of $\bK_{hr}$.
As a final and third interpolation operator we need the (tensor product based) Gauss-Lobatto nodal interpolation $\mathcal{Q}_{hr}: H^{\beta_g}(\Gamma_c) \rightarrow \gamma(\bV_{hr})$ with, see \cite{bernardi1992polynomial},
\begin{align} 
      \|v - \mathcal{Q}_{hr} v \|_{L^2(\Gamma_c)} & \leq C_Q \frac{h^{\min\{r+1,\beta_g\}}}{r^{\beta_g}} |v|_{H^{\beta_g}(\Gamma_c)} \quad \text{for } \beta_g> \frac{d-1}{2}  \label{eq:interpol_error_est_Q_L2}\\
      \|v - \mathcal{Q}_{hr} v \|_{H^{1/2}(\Gamma_c)} & \leq C_Q \frac{h^{\min\{r+1/2,\beta_g-1/2\}}}{r^{\beta_g-1/2}} |v|_{H^{\beta_g}(\Gamma_c)} \quad \text{for } \beta_g> \frac{d-1}{2} + \frac{1}{4}.  \label{eq:interpol_error_est_Q_H12}
\end{align}

With the finite element sets, the interpolation operators and the abstract results from Section \ref{sec:abstractGalerkin} at hand we may analyze the $hp$-finite element Biot contact problem: Find $(u_{hr},p_{ks}) \in \bK_{hr} \times V_{ks}$ such that 
\begin{subequations}\label{hpVIabstract}
\begin{alignat}{2}
a(\bu_{hr},\bv_{hr} - \bu_{hr}) -b(\bv_{hr} - \bu_{hr},p_{ks}) &\geq \langle \bfe, \bv_{hr} - \bu_{hr} \rangle
& \quad& \forall\  \bv_{hr} \in \bK_{hr} \label{hpVIabstractA}\\
-b(\bu_{hr},q_{ks})-c(p_{ks},q_{ks})&=\langle f_f, q_{ks} \rangle &\quad& \forall\ q_{ks} \in V_{ks}  . \label{hpVIabstractB}
\end{alignat}
\end{subequations}

\begin{corollary} \label{cor:stability_fe_sol}
Let $0\leq g \in H^{\beta_g}(\Gamma_c)$ with $\beta_g > (d-1)/2$. Then there exists exactly one solution to \eqref{hpVIabstract}. That solution satisfies the stability estimate
  \begin{align*}
\left(\norm{{
\bu_{hr}}}_{\bV}^2 + \norm{{p_{ks}}}_{\bV}^2 \right)^{1/2}
     &\leq \norm{\bfe}_{\bV^*}+\norm{f_f}_{V^*}  .
  \end{align*}
\end{corollary}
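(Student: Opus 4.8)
The plan is to verify that Corollary~\ref{cor:stability_fe_sol} is a direct specialization of the abstract discrete theory of Section~\ref{sec:abstractGalerkin} to the concrete $hp$-finite element sets $\bV_N = \bV_{hr}$, $V_M = V_{ks}$, and $\bK_N = \bK_{hr}$. First I would check the structural hypotheses required by Theorem~\ref{thm:DiscreteExistence}: namely that $\bV_{hr}\subset \bV$ and $V_{ks}\subset V$ are finite-dimensional subspaces (immediate from the definitions of the spaces), and that $\bK_{hr}$ is a non-empty, closed, convex subset of $\bV_{hr}$. For this last point I would invoke the discussion immediately following the definition of $\bK_{hr}$: since $0\le g\in H^{\beta_g}(\Gamma_c)$ with $\beta_g>(d-1)/2$ embeds into $C^0(\Gamma_c)$, the pointwise Gauss--Lobatto constraints are well defined, so $\bK_{hr}$ is closed and convex as a finite intersection of half-spaces in the finite-dimensional space $\bV_{hr}$, and it is non-empty because $g\ge 0$ forces $\mathbf 0\in\bK_{hr}$.

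Next, since $\mathbf 0\in\bK_{hr}$, I would apply the second (sharper) stability estimate of Theorem~\ref{thm:DiscreteExistence}, which under the hypothesis $\mathbf 0\in\bK_N$ yields exactly
\[
\left(\norm{\bu_{hr}}_{\bV}^2 + \norm{p_{ks}}_{\bV}^2\right)^{1/2} \le \norm{\bfe}_{\bV^*} + \norm{f_f}_{V^*},
\]
together with existence and uniqueness of the solution pair. The unique solvability part rests on Lemma~\ref{lem:DM_coercive} (the discrete operator $D_M$ is $M$-uniformly elliptic and continuous) combined with the Stampacchia theorem for the reduced discrete variational inequality~\eqref{DiscreteReducedVI}, exactly as in the proof of Theorem~\ref{thm:DiscreteExistence}; here there is nothing new to do beyond observing that the abstract hypotheses hold.

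There is essentially no hard step in this corollary: it is a bookkeeping specialization. The only point requiring a moment's care is confirming that the Sobolev embedding $H^{\beta_g}(\Gamma_c)\hookrightarrow C^0(\Gamma_c)$ is available for $\beta_g>(d-1)/2$ on the $(d-1)$-dimensional manifold $\Gamma_c$, which is what makes the nodal constraints defining $\bK_{hr}$ meaningful and in turn guarantees $\mathbf 0\in\bK_{hr}$; everything else is a citation of Theorem~\ref{thm:DiscreteExistence}. I would therefore write the proof in two or three lines: state that the hypotheses of Theorem~\ref{thm:DiscreteExistence} are met with $\bK_N=\bK_{hr}$, note that $g\ge 0$ implies $\mathbf 0\in\bK_{hr}$, and conclude by the second estimate of that theorem.
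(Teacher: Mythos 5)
Your proposal is correct and follows the same route as the paper: Sobolev embedding $H^{\beta_g}(\Gamma_c)\hookrightarrow C^0(\Gamma_c)$ for $\beta_g>(d-1)/2$ makes the nodal constraints (and hence $\bK_{hr}$) well defined, and then existence, uniqueness and the stability bound are read off from Theorem~\ref{thm:DiscreteExistence}. You are in fact slightly more explicit than the paper in pointing out that $g\ge 0$ yields $\mathbf 0\in\bK_{hr}$, which is the hypothesis needed for the sharper of the two stability estimates.
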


\begin{proof}
From $H^{\beta_g}(\Gamma_c)\subset C^0(\Gamma_c)$ for $\beta_g > (d-1)/2$ we obtain the well posedness of $\bK_{hr}$ and from Theorem~\ref{thm:DiscreteExistence} the unique existence of a discrete FE-solution as well as
\begin{align*}
  \left(\norm{{
  \bu_{hr}}}_{\bV}^2 + \norm{{
  p_{ks}}}_{\bV}^2 \right)^{1/2}
     &\leq \norm{\bfe}_{\bV^*}+\norm{f_f}_{V^*}.
\end{align*}
\end{proof}

The rest of this section is dedicated to the derivation of a priori error estimates. To this end, we use the following abbreviated notation. We simply
write $a(\xi) \lesssim b(\xi)$ if there exists a positive constant $C$ such that $a(\xi) \leq Cb(\xi)$ holds
for all admissible $\xi$. Further, we write $a(\xi) \equiv b(\xi)$ if both $a(\xi) \lesssim b(\xi)$ and $b(\xi) \gtrsim a(\xi)$ hold.
\begin{lemma} \label{lem:conver_rate_non_conformity}
Let $\bv_{hr} \in \bK_{hr}$ and $g \in H^{\frac{d-1}{2} + \frac{1}{4}+\epsilon}(\Gamma_c)$ for an $\epsilon>0$, then
\begin{align*}
    \inf_{\bv \in \bK} \| \gamma(\bv - \bv_{hr}) \|_{L^2(\Gamma_c)} & \lesssim   \left(\frac{h}{r}\right)^{1/2}\left(\|\bv_{hr} \|_{\bH^1(\Omega)} + \| g  \|_{H^{\frac{d-1}{2} + \frac{1}{4}+\epsilon}(\Gamma_c)} \right).
\end{align*}
\end{lemma}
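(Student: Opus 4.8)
The plan is to exhibit a convenient competitor $\bv\in\bK$ and thereby reduce the infimum to a local approximation estimate on the contact boundary. Write $w:=\gamma\bv_{hr}$ and decompose $\Gamma_c=\bigcup_{e\in\mathcal{E}_{h,\Gamma_c}}\bar e$. Since $g\in H^{\frac{d-1}{2}+\frac14+\epsilon}(\Gamma_c)\hookrightarrow C^0(\Gamma_c)$ and $w|_e$ is a polynomial on each $e$, the violation $\psi:=(w-g)_+$, with $(\,\cdot\,)_+:=\max\{\,\cdot\,,0\}$, is continuous on each $e$ and belongs to $H^{1/2}(\Gamma_c)$ by truncation stability of $H^{1/2}$; hence $w-\psi=\min\{w,g\}\le g$ on $\Gamma_c$, and, using that $\gamma:\bV\to H^{1/2}(\Gamma_c)$ is onto (which holds because $\overline{\Gamma_d}\cap\overline{\Gamma_c}=\emptyset$), there is $\bv\in\bV$ with $\gamma\bv=\min\{w,g\}$, so $\bv\in\bK$. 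This gives
\[
\inf_{\bv\in\bK}\|\gamma(\bv-\bv_{hr})\|_{L^2(\Gamma_c)}\le\|\psi\|_{L^2(\Gamma_c)}=\Big(\sum_{e\in\mathcal{E}_{h,\Gamma_c}}\|(w-g)_+\|_{L^2(e)}^2\Big)^{1/2},
\]
and it remains to estimate $\|(w-g)_+\|_{L^2(e)}$ on each contact edge/face.

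The decisive observation is that the discrete constraint $w(\hat x_{i,e})\le g(\hat x_{i,e})$ forces $(w-g)_+$ to vanish at every Gauss--Lobatto node, so that $\mathcal{Q}_{hr}[(w-g)_+]|_e=0$ and $(w-g)_+|_e$ equals its own nodal interpolation error. Applying the (edge/face-local form of the) interpolation estimate \eqref{eq:interpol_error_est_Q_L2} with the regularity index $\beta_g:=\frac{d-1}{2}+\frac14+\epsilon$ -- where we may assume $\epsilon<\frac14$, since this only weakens the hypothesis and shrinks the right-hand side -- and using $r_e\ge1$, $\frac12<\beta_g<\frac32<r_e+1$ and $h_e/r_e\lesssim1$, we obtain
\[
\|(w-g)_+\|_{L^2(e)}\lesssim\frac{h_e^{\min\{r_e+1,\beta_g\}}}{r_e^{\beta_g}}\,\big|(w-g)_+\big|_{H^{\beta_g}(e)}\lesssim\Big(\frac{h_e}{r_e}\Big)^{1/2}\big|(w-g)_+\big|_{H^{\beta_g}(e)}.
\]
The condition $\beta_g>(d-1)/2$ is what makes the nodal interpolation estimate applicable, and $\beta_g<\frac32$ ensures that the truncation $(\,\cdot\,)_+$ is bounded on $H^{\beta_g}(e)$, whence $\big|(w-g)_+\big|_{H^{\beta_g}(e)}\lesssim\|w\|_{H^{\beta_g}(e)}+\|g\|_{H^{\beta_g}(e)}$.

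It then remains to sum over $e$. For $g$ one uses $\sum_e\|g\|_{H^{\beta_g}(e)}^2\lesssim\|g\|_{H^{\beta_g}(\Gamma_c)}^2$. For the polynomial trace $w|_e$ one uses a suitable $hp$-inverse estimate relating $\|w\|_{H^{\beta_g}(e)}$ to $\|w\|_{H^{1/2}(e)}$ -- whose exponent in $r_e/h_e$ is exactly what compensates the extra power $(h_e/r_e)^{\beta_g-1/2}$ above -- together with the fact that the broken $H^{1/2}$-norm is controlled by the global one and by the trace inequality, $\sum_e\|w\|_{H^{1/2}(e)}^2\lesssim\|\gamma\bv_{hr}\|_{H^{1/2}(\Gamma_c)}^2\lesssim\|\bv_{hr}\|_{\bH^1(\Omega)}^2$. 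Combining, $\|(w-g)_+\|_{L^2(\Gamma_c)}\lesssim(h/r)^{1/2}(\|\bv_{hr}\|_{\bH^1(\Omega)}+\|g\|_{H^{\beta_g}(\Gamma_c)})$, which is the claim. (An equivalent route first replaces $g$ by $\mathcal{Q}_{hr}g$, using $(w-g)_+\le(w-\mathcal{Q}_{hr}g)_++|\mathcal{Q}_{hr}g-g|$ with the second summand handled by \eqref{eq:interpol_error_est_Q_L2}, and applies the same reasoning to the genuine polynomial $w-\mathcal{Q}_{hr}g$, which is $\le0$ at the Gauss--Lobatto nodes.)

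The point of difficulty, and the reason for the precise regularity hypothesis on $g$, is this balancing of powers: $(w-g)_+$ is globally only $H^{1/2}$-regular on $\Gamma_c$ -- the borderline case for point evaluation when $d=3$ -- so one must pay a Sobolev norm of order $\beta_g$ slightly above $1/2$ and then recover it from $\|\bv_{hr}\|_{\bH^1(\Omega)}$ via trace and $hp$-inverse estimates without generating a spurious power of the polynomial degree; this forces $g\in H^{\frac{d-1}{2}+\frac14+\epsilon}(\Gamma_c)$ and produces precisely the rate $(h/r)^{1/2}$.
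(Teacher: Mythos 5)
Your construction of the competitor $\bv$ (truncate the discrete normal trace at the obstacle, observe that the violation $(w-g)_+$ vanishes at the Gauss--Lobatto nodes and hence equals its own nodal interpolation error) is exactly the paper's starting point, and the lifting via $\overline{\Gamma_d}\cap\overline{\Gamma_c}=\emptyset$ is fine. The gap is in the quantitative step that is supposed to produce the factor $(h/r)^{1/2}$. You apply the interpolation error estimate at the single regularity index $\beta_g=\tfrac{d-1}{2}+\tfrac14+\epsilon$ and then propose to return from $H^{\beta_g}(e)$ to $H^{1/2}(e)$ by an $hp$-inverse estimate, asserting that the exponents cancel. They do not: the polynomial inverse inequality costs $r_e^{2(\beta_g-1/2)}$ (two powers of the degree per unit of differentiation, by the Markov-type inequality), not $r_e^{\beta_g-1/2}$, so the product of the interpolation factor $h_e^{\beta_g}r_e^{-\beta_g}$ with $(r_e^{2}/h_e)^{\beta_g-1/2}$ is $h_e^{1/2}r_e^{\beta_g-1}$. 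For $d=2$ this is $h^{1/2}r^{-1/4+\epsilon}$, weaker than the claimed $h^{1/2}r^{-1/2}$; for $d=3$ it is $h^{1/2}r^{1/4+\epsilon}$, which \emph{grows} with the polynomial degree, so your argument does not even yield convergence of this term under $p$-refinement. (The $h$-powers do cancel, so the argument survives for the pure $h$-version with fixed $r$, but the lemma and the subsequent convergence theorems need the $r$-dependence.)

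The paper avoids the inverse estimate altogether. It works with the continuous piecewise polynomial $\min\{0,\mathcal{Q}_{hr}g-\gamma(\bv_{hr})\}$ (your parenthetical variant), whose Gauss--Lobatto interpolant vanishes, and derives \emph{two} bounds for its $L^2$-norm: the trivial one by $\|\mathcal{Q}_{hr}g-\gamma(\bv_{hr})\|_{L^2(\Gamma_c)}$ and one by $C(h/r)\|\mathcal{Q}_{hr}g-\gamma(\bv_{hr})\|_{H^1(\Gamma_c)}$ for $d=2$ (resp. $(h/r)^{1+\epsilon}\|\cdot\|_{H^{1+\epsilon}(\Gamma_c)}$ for $d=3$). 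Real interpolation between these two estimates then gives $(h/r)^{1/2}\|\mathcal{Q}_{hr}g-\gamma(\bv_{hr})\|_{H^{1/2}(\Gamma_c)}$, and $H^{1/2}(\Gamma_c)$ is precisely the space controlled by $\|\bv_{hr}\|_{\bH^1(\Omega)}$ through the trace theorem, so no inverse estimate and no spurious powers of $r$ appear. To repair your proof, replace the single-exponent estimate plus inverse inequality by this interpolation-of-two-estimates argument.
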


\begin{proof}
  We follow the ideas presented in \cite{maischak2005adaptive,gwinner2013hp} and generalize them to the three dimensional case.  Let $\bv - \bv_\bn \cdot \bn = \bv_{hr} - \gamma(\bv_{hr}) \cdot \bn$ on $\Gamma$ and $\bv_\bn = \min \{\gamma(\bv_{hr}) - \mathcal{Q}_{hr}(g),0 \}+g$ on $\Gamma_c$. As $\overline{\Gamma_c}\cap \overline{\Gamma_d} = \emptyset$ there exists an extension of $\bv_\bn$ onto the whole $\Gamma$ such that $\bv$ now defined on $\Gamma$ can be lifted to some $\bv \in \bK$. We emphasize that $\bv_\bn \leq g$ on $\Gamma_c$ and that $\min \{\gamma(\bv_{hr}) - \mathcal{Q}_{hr}(g),0 \}$ is continuous, piecewise polynomial and thus in $H^{3/2-\epsilon}(\Gamma_c)$ for any $\epsilon>0$. Moreover, on $\Gamma_c$, $\gamma(\bv-\bv_{hr}) = \min \{0, \mathcal{Q}_{hr}(g) - \gamma(\bv_{hr}) \}+g -\mathcal{Q}_{hr}(g) $ and
  \begin{align*}
      \mathcal{Q}_{hr} \left( \min \{0, \mathcal{Q}_{hr}(g) - \gamma(\bv_{hr}) \} \right)=\mathcal{Q}_{hr} \left( \min \{0, g - \gamma(\bv_{hr}) \} \right) = 0  \end{align*}
  since $\bv_{hr} \in \bK_{hr}$. If $d=2$ we obtain from \eqref{eq:interpol_error_est_Q_L2}
  \begin{align*}
      \|\min \{0, \mathcal{Q}_{hr}(g) - \gamma(\bv_{hr}) \} - 0 \|_{L^2(\Gamma_c)} &\leq \| \mathcal{Q}_{hr}(g) - \gamma(\bv_{hr})   \|_{L^2(\Gamma_c)}, \\
      \|\min \{0, \mathcal{Q}_{hr}(g) - \gamma(\bv_{hr}) \} - 0 \|_{L^2(\Gamma_c)} &\leq C_Q \frac{h}{r} \|\min \{0, \mathcal{Q}_{hr}(g) - \gamma(\bv_{hr}) \} \|_{H^1(\Gamma_c)} \\
      &\leq C_Q \frac{h}{r} \| \mathcal{Q}_{hr}(g) - \gamma(\bv_{hr})   \|_{H^1(\Gamma_c)}.
  \end{align*}
  Real interpolation of Sobolev spaces, see e.g.~\cite[Prob.~14.1.5, Sec.~14.3]{brenner2008mathematical}, yields
  \begin{align*}
      \|\min \{0, \mathcal{Q}_{hr}(g) - \gamma(\bv_{hr}) \} - 0 \|_{L^2(\Gamma_c)} \leq C \left(\frac{h}{r}\right)^{1/2}\| \mathcal{Q}_{hr}(g) - \gamma(\bv_{hr})   \|_{H^{1/2}(\Gamma_c)}.
  \end{align*}
If $d=3$, we need $\beta_g>1$ in \eqref{eq:interpol_error_est_Q_L2}, i.e.  
   \begin{align*}
      \|\min \{0, \mathcal{Q}_{hr}(g) - \gamma(\bv_{hr}) \} - 0 \|_{L^2(\Gamma_c)} &\leq \| \min \{0, \mathcal{Q}_{hr}(g) - \gamma(\bv_{hr}) \}  \|_{L^2(\Gamma_c)} \\
      \|\min \{0, \mathcal{Q}_{hr}(g) - \gamma(\bv_{hr}) \} - 0 \|_{L^2(\Gamma_c)} &\leq C_Q \left(\frac{h}{r}\right)^{1+\epsilon}   \|\min \{0, \mathcal{Q}_{hr}(g) - \gamma(\bv_{hr}) \} \|_{H^{1+\epsilon}(\Gamma_c)}
  \end{align*}
  for some arbitrarily small $\epsilon>0$. As before we apply real interpolation of Sobolev spaces but noting that for Hilbert spaces there even holds, see \cite[Eq.~14.2.5]{brenner2008mathematical},
  \begin{align}
      \left[ L^2(\Gamma_c), H^{1+\epsilon}(\Gamma_c) \right]_{\frac{1}{2(1+\epsilon)},2} = H^{1/2}(\Gamma_c)
  \end{align}
  with equivalent norms.
  Hence,
  \begin{align*}
      \|\min \{0, \mathcal{Q}_{hr}(g) - \gamma(\bv_{hr}) \} - 0 \|_{L^2(\Gamma_c)} &\lesssim \left(\frac{h}{r}\right)^{1/2}\|\min \{0, \mathcal{Q}_{hr}(g) - \gamma(\bv_{hr}) \}   \|_{H^{1/2}(\Gamma_c)} \\
      & \lesssim \left(\frac{h}{r}\right)^{1/2}\|\mathcal{Q}_{hr}(g) - \gamma(\bv_{hr})   \|_{H^{1/2}(\Gamma_c)}.
  \end{align*}  
  Thus,
  \begin{align*}
      \| \gamma(\bv - \bv_{hr}) \|_{L^2(\Gamma_c)} & \leq \|\min \{0, \mathcal{Q}_{hr}(g) - \gamma(\bv_{hr}) \} - 0 \|_{L^2(\Gamma_c)} + \|g - \mathcal{Q}_{hr}(g)\|_{L^2(\Gamma_c)} \\
      & \lesssim  \left(\frac{h}{r}\right)^{1/2}\| \mathcal{Q}_{hr}(g) - \gamma(\bv_{hr})   \|_{H^{1/2}(\Gamma_c)} + \|g - \mathcal{Q}_{hr}(g)\|_{L^2(\Gamma_c)}\\
      & \lesssim  \left(\frac{h}{r}\right)^{1/2}\left(\|\gamma(\bv_{hr})   \|_{H^{1/2}(\Gamma_c)} + \| g  \|_{H^{1/2}(\Gamma_c)}  \right) \\
      & \quad +\left(\frac{h}{r}\right)^{1/2} \| g- \mathcal{Q}_{hr}(g)  \|_{H^{1/2}(\Gamma_c)}+  \|g - \mathcal{Q}_{hr}(g)\|_{L^2(\Gamma_c)}\\
       & \lesssim  \left(\frac{h}{r}\right)^{1/2}\left(\|\gamma(\bv_{hr})  \|_{H^{1/2}(\Gamma_c)} + \| g  \|_{H^{\frac{d-1}{2} + \frac{1}{4}+\epsilon}(\Gamma_c)} \right)
  \end{align*}
  by \eqref{eq:interpol_error_est_Q_H12} with an arbitrary small $\epsilon>0$. Now, the assertion follows with the trace estimate for $\|\gamma(\bv_{hr})   \|_{H^{1/2}(\Gamma_c)}$.
\end{proof}

The guaranteed convergence rate of the non-conformity error due to $\bK_{hr} \not\subset \bK$ can be improved for the lowest order $h$-version.

\begin{corollary} \label{cor:conver_rate_non_conformity_linear}
Let $r=1$, $\bv_{hr} \in \bK_{hr}$ and $g \in H^{\beta_g}(\Gamma_c)$ with $\beta_g >\frac{d-1}{2} $, then there holds
\begin{align*}
    \inf_{\bv \in \bK} \| \gamma(\bv - \bv_{hr}) \|_{L^2(\Gamma_c)} & \lesssim    h^{\min\{2,\beta_g\}}  \| g  \|_{H^{\beta_g}(\Gamma_c)} .
\end{align*}
\end{corollary}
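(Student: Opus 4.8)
The plan is to reuse the lifting constructed in the proof of Lemma~\ref{lem:conver_rate_non_conformity} and to extract from it a sign property that is specific to the lowest order case $r=1$. As there, I take $\bv\in\bV$ whose tangential trace agrees with that of $\bv_{hr}$ on $\Gamma$ and whose normal trace on $\Gamma_c$ is $\bv_\bn=\min\{\gamma(\bv_{hr})-\mathcal{Q}_{hr}(g),0\}+g$, extended to all of $\Gamma$ in the same way; the operator $\mathcal{Q}_{hr}$ is available because $g\in H^{\beta_g}(\Gamma_c)$ with $\beta_g>\frac{d-1}{2}$. Since $\min\{\cdot,0\}\le 0$ we have $\bv_\bn\le g$ on $\Gamma_c$, hence $\bv\in\bK$, and on $\Gamma_c$ one computes, exactly as in that proof, $\gamma(\bv-\bv_{hr})=\min\{0,\mathcal{Q}_{hr}(g)-\gamma(\bv_{hr})\}+\bigl(g-\mathcal{Q}_{hr}(g)\bigr)$. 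The key claim is that for $r=1$ the first summand vanishes identically on $\Gamma_c$, so that $\gamma(\bv-\bv_{hr})=g-\mathcal{Q}_{hr}(g)$ there and only the interpolation error of $g$ remains.

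To prove the claim, fix an edge/face $e\in\mathcal{E}_{h,\Gamma_c}$. For $r_e=1$ the Gauss-Lobatto nodes $\{\hat x_{i,e}\}$ are precisely the $2^{d-1}$ vertices of the reference cell $[-1,1]^{d-1}$, which $\mathcal{F}_e$ maps onto the vertices of $e$; since $\mathcal{Q}_{hr}(g)$ interpolates $g$ at these nodes and $\bv_{hr}\in\bK_{hr}$, the difference $\mathcal{Q}_{hr}(g)-\gamma(\bv_{hr})$, restricted to $e$ and pulled back via $\mathcal{F}_e$, belongs to $\mathbb{P}_1([-1,1]^{d-1})$ --- affine for $d=2$, bilinear for $d=3$ --- and is nonnegative at every vertex of the reference cell. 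The crucial point is that such a polynomial is then nonnegative on the whole cell: for $d=2$ this is just the observation that an affine univariate function nonnegative at the two endpoints of an interval is nonnegative throughout; for $d=3$ one applies this first on each of the four edges of $[-1,1]^2$, on which a bilinear polynomial is affine, and then, for an arbitrary interior point, along the coordinate line through it, whose two endpoints lie on edges already handled. Hence $\mathcal{Q}_{hr}(g)-\gamma(\bv_{hr})\ge 0$ on $e$, and since $e$ was arbitrary, on all of $\Gamma_c$; therefore $\min\{0,\mathcal{Q}_{hr}(g)-\gamma(\bv_{hr})\}=0$ there.

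It then only remains to estimate $\inf_{\bv\in\bK}\|\gamma(\bv-\bv_{hr})\|_{L^2(\Gamma_c)}\le\|g-\mathcal{Q}_{hr}(g)\|_{L^2(\Gamma_c)}$, which by \eqref{eq:interpol_error_est_Q_L2} with $r=1$ (so that $r^{\beta_g}=1$) is $\lesssim h^{\min\{2,\beta_g\}}\,|g|_{H^{\beta_g}(\Gamma_c)}\le h^{\min\{2,\beta_g\}}\,\|g\|_{H^{\beta_g}(\Gamma_c)}$, which is the asserted bound. I expect the only genuinely non-routine step to be the sign argument for multilinear polynomials on the reference cell; everything else is a direct specialization of the proof of Lemma~\ref{lem:conver_rate_non_conformity}. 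The benefit of the vanishing first summand is that, in contrast to that lemma, no term controlling the residual non-penetration of $\bv_{hr}$ enters the bound, which is precisely why neither a $\|\bv_{hr}\|_{\bH^1(\Omega)}$ contribution nor the stronger regularity $g\in H^{\frac{d-1}{2}+\frac{1}{4}+\epsilon}(\Gamma_c)$ is required here --- only the regularity $\beta_g>\frac{d-1}{2}$ that \eqref{eq:interpol_error_est_Q_L2} itself demands.
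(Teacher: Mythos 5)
Your proposal is correct and follows essentially the same route as the paper: reuse the lifting from Lemma~\ref{lem:conver_rate_non_conformity}, observe that for $r=1$ the constraint at the Gauss--Lobatto nodes (the vertices) forces $\gamma(\bv_{hr})\le\mathcal{Q}_{hr}g$ everywhere on $\Gamma_c$ so that $\gamma(\bv-\bv_{hr})=g-\mathcal{Q}_{hr}g$, and conclude with \eqref{eq:interpol_error_est_Q_L2}. The only difference is that you spell out the sign argument for multilinear polynomials on the reference cell, which the paper asserts without proof.
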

\begin{proof}
  The proof follows the lines of Lemma~\ref{lem:conver_rate_non_conformity}, but we note that for $r=1$ the interpolant $\mathcal{Q}_{hr}g$ is continuous, piecewise linear just like $\gamma(\bv_{hr})$. As the non-penetration condition in $\bK_{hr}$ is enforced in the vertices of the mesh we have $\gamma(\bv_{hr}) \leq \mathcal{Q}_{hr}g $ everywhere on $\Gamma_c$. Hence, the $\gamma(\bv - \bv_{hr} ) = g - \mathcal{Q}_{hr}g$ for the $\bv \in \bK$ constructed in the proof of Lemma~\ref{lem:conver_rate_non_conformity}. The interpolation error estimate \eqref{eq:interpol_error_est_Q_L2} yields the assertion.
 \end{proof}

With the previous results at hand we can easily prove convergence of the finite element method.

\begin{theorem}
If $0 \leq g \in H^{\frac{d-1}{2} + \frac{1}{4}+\epsilon}(\Gamma_c)$ for an $\epsilon>0$ then there holds
  \begin{align*}
      \lim_{h/r, k/s \rightarrow 0} \norm{\bu - \bu_{hr}}_{\bV}^2 +\norm{p-p_{ks}}_{V}^2 = 0.
  \end{align*}
\end{theorem}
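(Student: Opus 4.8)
The plan is to obtain weak convergence of the discrete solutions along arbitrary subsequences, identify the weak limit as the exact solution, and then upgrade to the asserted strong convergence via the a priori estimate of Theorem~\ref{thm:StrangFalk}. By Corollary~\ref{cor:stability_fe_sol} the quantities $\norm{\bu_{hr}}_{\bV}$ and $\norm{p_{ks}}_{\bV}$ are bounded uniformly in $h,r,k,s$; hence, given any sequence of discretizations with $h/r,k/s\to0$, a subsequence (not relabelled) satisfies $\bu_{hr}\rightharpoonup\bu^*$ in $\bV$ and $p_{ks}\rightharpoonup p^*$ in $V$. If $(\bu^*,p^*)$ is shown to solve \eqref{VIabstract}, then uniqueness (Theorem~\ref{thm:existence}) forces $(\bu^*,p^*)=(\bu,p)$; as the subsequence was arbitrary, the whole family converges weakly to $(\bu,p)$.

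The delicate point is $\bu^*\in\bK$, since $\bK_{hr}\not\subset\bK$. Here I invoke Lemma~\ref{lem:conver_rate_non_conformity} (applicable precisely because of the assumed regularity $g\in H^{(d-1)/2+1/4+\epsilon}(\Gamma_c)$) together with the uniform bound on $\|\bu_{hr}\|_{\bH^1}$ to obtain $\bw_{hr}\in\bK$ with $\|\gamma(\bw_{hr}-\bu_{hr})\|_{L^2(\Gamma_c)}\lesssim(h/r)^{1/2}\to0$; since $\gamma$ is compact from $\bV$ into $L^2(\Gamma_c)$ (via $H^{1/2}(\Gamma_c)\hookrightarrow\hookrightarrow L^2(\Gamma_c)$) we get $\gamma(\bu_{hr})\to\gamma(\bu^*)$, hence $\gamma(\bw_{hr})\to\gamma(\bu^*)$, in $L^2(\Gamma_c)$; each $\gamma(\bw_{hr})\le g$ a.e., and this passes to the $L^2$-limit, so $\gamma(\bu^*)\le g$, i.e.\ $\bu^*\in\bK$. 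To pass to the limit in \eqref{hpVIabstractB}, fix $q\in V$ and take $q_{ks}\in V_{ks}$ with $q_{ks}\to q$ in $V$ (standard density of $hp$-spaces as $k/s\to0$); continuity of $b,c$ and the weak$\times$strong products then give $-b(\bu^*,q)-c(p^*,q)=\langle f_f,q\rangle$. For \eqref{hpVIabstractA}, given $\bv$ in a dense subset of $\bK$ consisting of functions in $\bH^{\beta_u}(\Omega)$ I set $\bv_{hr}=\mathcal{J}_{hr}\bv\in\bK_{hr}$, which converges to $\bv$ in $\bV$ by \eqref{eq:interpol_error_est_J}; rewriting \eqref{hpVIabstractA} as $a(\bu_{hr},\bv_{hr})-a(\bu_{hr},\bu_{hr})-b(\bv_{hr}-\bu_{hr},p_{ks})\ge\langle\bfe,\bv_{hr}-\bu_{hr}\rangle$ and taking $\limsup$ — using weak lower semicontinuity of $\bw\mapsto a(\bw,\bw)=\norm{\bw}_{\bV}^2$, weak$\times$strong convergence of $a(\bu_{hr},\bv_{hr})$ and of the right-hand side, and the Rellich compactness $H^1(\Omega)\hookrightarrow\hookrightarrow L^2(\Omega)$ to treat the otherwise weak$\times$weak product $b(\bv_{hr}-\bu_{hr},p_{ks})$ — yields $a(\bu^*,\bv-\bu^*)-b(\bv-\bu^*,p^*)\ge\langle\bfe,\bv-\bu^*\rangle$, and density plus continuity extend this to all $\bv\in\bK$. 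Thus $(\bu^*,p^*)$ solves \eqref{VIabstract}.

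To upgrade to strong convergence I apply Theorem~\ref{thm:StrangFalk} with $\bv=\bu$, with $\bv_{hr}\in\bK_{hr}$ a recovery sequence for $\bu$ (again $\mathcal{J}_{hr}$ applied to a regular element of $\bK$ near $\bu$, so $\norm{\bu-\bv_{hr}}_{\bV}\to0$), and with $q_{ks}\in V_{ks}$ satisfying $\norm{p-q_{ks}}_V\to0$. Since $\bv_N-\bu+\bv-\bu_N=\bv_{hr}-\bu_{hr}$ for this choice, the estimate reads
\[
\norm{\bu-\bu_{hr}}_{\bV}^2+\norm{p-p_{ks}}_V^2\ \lesssim\ \norm{\bu-\bv_{hr}}_{\bV}^2+\norm{p-q_{ks}}_V^2+\langle\bsigma_\bn(\bu,p),\,(\bv_{hr}-\bu_{hr})\cdot\bn\rangle_{\Gamma_c}.
\]
The first two terms vanish by construction. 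For the third, $\bv_{hr}-\bu_{hr}\rightharpoonup\mathbf{0}$ in $\bV$ (as $\bv_{hr}\to\bu$ strongly and $\bu_{hr}\rightharpoonup\bu$ weakly by the previous step), so $(\bv_{hr}-\bu_{hr})\cdot\bn\rightharpoonup0$ in $H^{1/2}(\Gamma_c)$, and since $\bsigma_\bn(\bu,p)\in\tilde{H}^{-1/2}(\Gamma_c)$ the pairing tends to zero. Hence the right-hand side tends to zero, which is the assertion.

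I expect the main obstacle to be the identification of the weak limit, above all the inclusion $\bu^*\in\bK$: only an $L^2(\Gamma_c)$-bound on the non-penetration defect is available (Lemma~\ref{lem:conver_rate_non_conformity}), so one must argue through the compactness of the trace rather than through an $H^{1/2}$-estimate — which also explains why a purely direct application of Theorem~\ref{thm:StrangFalk}, bypassing the preliminary weak-convergence step that renders $\langle\bsigma_\bn(\bu,p),(\bv_{hr}-\bu_{hr})\cdot\bn\rangle_{\Gamma_c}$ harmless, does not close the estimate. A secondary, more technical point is the density in $\bK$ of sufficiently regular functions (those in $\bH^{\beta_u}(\Omega)$, to which $\mathcal{J}_{hr}$ applies), on which all recovery sequences rest.
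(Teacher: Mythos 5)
Your argument is correct, but it follows a genuinely different route from the paper's. The paper applies Theorem~\ref{thm:StrangFalk} once and kills the problematic consistency term $\langle \bsigma_\bn(\bu,p),\gamma(\bv-\bu_{hr})\rangle_{\Gamma_c}$ purely quantitatively: it splits $\bsigma_\bn=(\bsigma_\bn-\bsigma_\epsilon)+\bsigma_\epsilon$ using the density of $L^2(\Gamma_c)$ in $\tilde{H}^{-1/2}(\Gamma_c)$, pairs the remainder in the $\tilde{H}^{-1/2}\times H^{1/2}$ duality against the uniformly bounded (by Corollary~\ref{cor:stability_fe_sol}) trace of $\bv-\bu_{hr}$, and pairs the $L^2$ part against the $(h/r)^{1/2}$ non-conformity bound of Lemma~\ref{lem:conver_rate_non_conformity}; no compactness is needed and the whole proof is one pass. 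You instead run the classical Glowinski-type scheme: uniform stability, weak compactness, identification of the weak limit as the unique solution (the inclusion $\bu^*\in\bK$ via trace compactness plus Lemma~\ref{lem:conver_rate_non_conformity}, and a Minty/weak-lower-semicontinuity passage to the limit in the inequality, with Rellich handling the $b(\cdot,\cdot)$ product), and only then a second pass through Theorem~\ref{thm:StrangFalk} with $\bv=\bu$, where the consistency term becomes a fixed-functional-against-weakly-null pairing and vanishes. Both are valid; the paper's version is shorter and stays in the same quantitative framework that later delivers the rates of Theorem~\ref{thm:convergence_rates}, whereas yours avoids the $L^2$-density trick for $\bsigma_\bn$ at the cost of compactness arguments, a Minty limit passage, and two invocations of density of smooth functions in $\bK$ — note that you still end up needing both Lemma~\ref{lem:conver_rate_non_conformity} and Theorem~\ref{thm:StrangFalk}, so no machinery is actually saved.
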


\begin{proof}
We apply the general a priori error estimate of Theorem~\ref{thm:StrangFalk} and show that the right hand side tends to zero.

Since $\mathcal{J}_{hr}:\bH^2(\Omega) \cap \bK \rightarrow \bK_{hr}$ and $\bH^2(\Omega) \cap \bK$ is dense in $\bK$ there holds by classical density arguments that
\begin{gather*}
    \lim_{h/r \rightarrow 0} \inf_{\bv_{hr} \in \bK_{hr}}  \| \bu - \bv_{hr}\|_{\bH^1(\Omega)}=0 \text{ and }
    \lim_{k/s \rightarrow 0} \inf_{q_{ks} \in V_{ks}} \| p - q_{ks}\|_{H^1(\Omega)}=0 .
\end{gather*}
Duality and the trace theorem imply
\begin{align*}
    \langle \bsigma_\bn(\bu,p) , \gamma(\bv_{hr}-\bu ) \rangle_{\Gamma_c} \leq \|\bsigma_\bn(\bu,p) \|_{\tilde{H}^{-1/2}(\Gamma_c)} \|\bv_{hr}-\bu\|_{\bH^1(\Omega)}.
\end{align*}
With the equivalence of the norms, Lemma~\ref{lem:norm_equi}, it remains to estimate $\langle \bsigma_\bn(\bu,p) , \gamma(\bv - \bu_{hr}) \rangle_{\Gamma_c}$. As $L^2(\Gamma_c)$ is dense in $\tilde{H}^{-1/2}(\Gamma_c)$, we know that for any $\epsilon>0$ there exists a $\bsigma_\epsilon \in L^2(\Gamma_c)$ with $\|\bsigma_\epsilon-\bsigma_\bn(\bu,p) \|_{\tilde{H}^{-1/2}(\Gamma_c)} < \epsilon$. Hence, with Lemma~\ref{lem:conver_rate_non_conformity} including the $\bv$ constructed in its proof, trace estimate, \eqref{eq:interpol_error_est_Q_H12} and Corollary~\ref{cor:stability_fe_sol} we obtain
\begin{align*}
    \langle \bsigma_\bn(\bu,p) , \gamma(\bv - \bu_{hr}) \rangle_{\Gamma_c} &= \langle \bsigma_\bn(\bu,p)-\bsigma_\epsilon , \gamma(\bv - \bu_{hr}) \rangle_{\Gamma_c} + \langle \bsigma_\epsilon , \gamma(\bv - \bu_{hr}) \rangle_{\Gamma_c} \\
    & \leq \|\bsigma_\epsilon-\bsigma_\bn(\bu,p) \|_{\tilde{H}^{-1/2}(\Gamma_c)} \|\gamma(\bv - \bu_{hr}) \|_{H^{1/2}(\Gamma_c)}  \\
    & \quad + \|\bsigma_\epsilon \|_{L^2(\Gamma_c)}  \| \gamma(\bv - \bu_{hr})\|_{L^2(\Gamma_c)} \\
    & \lesssim \epsilon \left( \|\gamma(\bu_{hr})\|_{H^{1/2}(\Gamma_c)} + \|g - \mathcal{Q}_{hr} g\|_{H^{1/2}(\Gamma_c)} + \|g\|_{H^{1/2}(\Gamma_c)} \right) \\
    & \quad + \|\bsigma_\epsilon \|_{L^2(\Gamma_c)} \left(\frac{h}{r}\right)^{1/2}\left(\|\bv_{hr} \|_{\bH^1(\Omega)} + \| g  \|_{H^{\frac{d-1}{2} + \frac{1}{4}+\epsilon}(\Gamma_c)} \right) \\
    & \lesssim \epsilon \left( \|\bu_{hr} \|_{\bH^1(\Omega)} + \| g  \|_{H^{\frac{d-1}{2} + \frac{1}{4}+\epsilon}(\Gamma_c)} \right) \\
    & \quad + \|\bsigma_\epsilon \|_{L^2(\Gamma_c)} \left(\frac{h}{r}\right)^{1/2}\left(\|\bu_{hr} \|_{\bH^1(\Omega)} + \| g  \|_{H^{\frac{d-1}{2} + \frac{1}{4}+\epsilon}(\Gamma_c)} \right) \\
    & \lesssim \epsilon + \|\bsigma_\epsilon \|_{L^2(\Gamma_c)} \left(\frac{h}{r}\right)^{1/2}.
\end{align*}
Choosing $h/r$ sufficiently small yields $\langle \bsigma_\bn(\bu,p) , \gamma(\bv - \bu_{hr}) \rangle_{\Gamma_c} \lesssim \epsilon$ and thus the assertion.
\end{proof}

It is possible to reduce the regularity requirement for the convergence theorem from $g \in H^{\frac{d-1}{2} + \frac{1}{4}+\epsilon}(\Gamma_c)$ to $g \in H^{\frac{d-1}{2}+\epsilon}(\Gamma_c)$, the minimum needed for the discrete problem to be well posed, but it requires a different technique, see \cite{gwinner2013hp} for a two dimensional contact case.

\begin{theorem} \label{thm:convergence_rates}
Let $\bu \in \bH^{\beta_u}(\Omega)$ with $\beta_u>(d+1)/2$, $p \in H^{\beta_p}(\Omega)$ with $\beta_p \geq 1$, $\bsigma_\bn(\bu,p)|_{\Gamma_c} \in L^2(\Gamma_c)$ and $g \in H^{\frac{d-1}{2} + \frac{1}{4}+\epsilon}(\Gamma_c)$ for an $\epsilon>0$. Then there holds
\begin{align*}
    \norm{\bu - \bu_{hr}}_{\bV}^2 +\norm{p-p_{ks}}_{V}^2 \lesssim \left(\frac{h}{r}\right)^{1/2} +   \frac{k^{2\min\{s,\beta_p-1\}}}{s^{2(\beta_p-1)}} .
\end{align*}
\end{theorem}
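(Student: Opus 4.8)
The plan is to specialize the abstract a priori estimate of Theorem~\ref{thm:StrangFalk} to the $hp$-setting with the natural interpolants as comparison functions and then to bound each resulting term by the claimed rate. Concretely, I would apply Theorem~\ref{thm:StrangFalk} with $\bv_N = \mathcal{J}_{hr}\bu$, $q_M = \mathcal{I}_{ks}p$ and, for $\bv$, the function in $\bK$ constructed in the proof of Lemma~\ref{lem:conver_rate_non_conformity} applied with $\bv_{hr}:=\bu_{hr}$. These choices are admissible: $\bu\in\bH^{\beta_u}(\Omega)\cap\bK$ together with the construction of $\bK_{hr}$ gives $\mathcal{J}_{hr}\bu\in\bK_{hr}$ (with $\beta_u>(d+1)/2$ being exactly the regularity $\mathcal{J}_{hr}$ needs), $p\in H^{\beta_p}(\Omega)\cap V$ gives $\mathcal{I}_{ks}p\in V_{ks}$, and $\bu_{hr}\in\bK_{hr}$ makes Lemma~\ref{lem:conver_rate_non_conformity} applicable to the discrete solution itself.

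For the best-approximation terms I would use the norm equivalences of Lemma~\ref{lem:norm_equi} together with \eqref{eq:interpol_error_est_J} and \eqref{eq:interpol_error_est_I}, obtaining $\norm{\bu-\mathcal{J}_{hr}\bu}_{\bV}^2\lesssim h^{2\min\{r,\beta_u-1\}}r^{-2(\beta_u-1)}|\bu|_{\bH^{\beta_u}(\Omega)}^2$ and $\norm{p-\mathcal{I}_{ks}p}_V^2\lesssim k^{2\min\{s,\beta_p-1\}}s^{-2(\beta_p-1)}|p|_{H^{\beta_p}(\Omega)}^2$. The second bound is already the $p$-term in the claim; in the first, $\beta_u>(d+1)/2$ forces $\beta_u-1>1/2$, so both exponents exceed $1$ and (since $h$ is bounded and $r\ge1$) the term is dominated by $(h/r)^{1/2}$.

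The remaining contribution is the linear contact residual $\langle\bsigma_\bn(\bu,p),(\bv_N-\bu+\bv-\bu_{hr})\cdot\bn\rangle_{\Gamma_c}$, which I would split as $\langle\bsigma_\bn(\bu,p),\gamma(\mathcal{J}_{hr}\bu-\bu)\rangle_{\Gamma_c}+\langle\bsigma_\bn(\bu,p),\gamma(\bv-\bu_{hr})\rangle_{\Gamma_c}$ and estimate by Cauchy--Schwarz in $L^2(\Gamma_c)$, using $\bsigma_\bn(\bu,p)|_{\Gamma_c}\in L^2(\Gamma_c)$. For the first piece the trace embedding $H^{1/2}(\Gamma_c)\hookrightarrow L^2(\Gamma_c)$, the trace theorem, and \eqref{eq:interpol_error_est_J} give $\|\gamma(\mathcal{J}_{hr}\bu-\bu)\|_{L^2(\Gamma_c)}\lesssim\|\mathcal{J}_{hr}\bu-\bu\|_{\bH^1(\Omega)}\lesssim h^{\min\{r,\beta_u-1\}}r^{-(\beta_u-1)}\lesssim(h/r)^{1/2}$, again using $\beta_u-1>1/2$. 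For the second piece, Lemma~\ref{lem:conver_rate_non_conformity} with $\bv_{hr}=\bu_{hr}$ yields $\|\gamma(\bv-\bu_{hr})\|_{L^2(\Gamma_c)}\lesssim(h/r)^{1/2}(\|\bu_{hr}\|_{\bH^1(\Omega)}+\|g\|_{H^{(d-1)/2+1/4+\epsilon}(\Gamma_c)})$, and Corollary~\ref{cor:stability_fe_sol} together with Lemma~\ref{lem:norm_equi} bounds $\|\bu_{hr}\|_{\bH^1(\Omega)}$ by $\norm{\bfe}_{\bV^*}+\norm{f_f}_{V^*}$, a discretization-independent constant. Hence the whole contact residual is $\lesssim(h/r)^{1/2}$.

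Inserting these bounds into the two inequalities of Theorem~\ref{thm:StrangFalk} and adding them gives $\norm{\bu-\bu_{hr}}_{\bV}^2+\norm{p-p_{ks}}_V^2\lesssim(h/r)^{1/2}+k^{2\min\{s,\beta_p-1\}}s^{-2(\beta_p-1)}$, which is the assertion. I expect the main obstacle to be the non-conformity contribution $\langle\bsigma_\bn,\gamma(\bv-\bu_{hr})\rangle_{\Gamma_c}$: it is the single term forcing the sub-optimal $(h/r)^{1/2}$ rate, and controlling it relies on the slightly unusual step of applying Lemma~\ref{lem:conver_rate_non_conformity} to the discrete solution itself and on the uniform a priori bound for $\bu_{hr}$ from Corollary~\ref{cor:stability_fe_sol} to keep the right-hand side of that lemma under control; everything else is routine $hp$-interpolation bookkeeping.
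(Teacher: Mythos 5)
Your proposal is correct and follows essentially the same route as the paper: Theorem~\ref{thm:StrangFalk} with $\bv_N=\mathcal{J}_{hr}\bu$, $q_M=\mathcal{I}_{ks}p$, and $\bv$ from Lemma~\ref{lem:conver_rate_non_conformity} applied to $\bu_{hr}$ itself, with Corollary~\ref{cor:stability_fe_sol} controlling $\|\bu_{hr}\|_{\bH^1(\Omega)}$. The only cosmetic difference is that the paper bounds $\|\gamma(\bu-\mathcal{J}_{hr}\bu)\|_{L^2(\Gamma_c)}$ via the multiplicative trace inequality $\|\cdot\|_{L^2(\Omega)}^{1/2}\|\cdot\|_{H^1(\Omega)}^{1/2}$, giving a slightly sharper intermediate exponent, but both versions are dominated by $(h/r)^{1/2}$ under $\beta_u>(d+1)/2$.
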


\begin{proof}
  We apply the general a priori error estimate of Theorem~\ref{thm:StrangFalk} and show that the right hand side tends to zero with a given rate.
  
With Theorem~\ref{thm:StrangFalk}, the equivalence of the norms, see Lemma~\ref{lem:norm_equi}, the Cauchy-Schwarz inequality, $\bsigma_\bn(\bu,p)|_{\Gamma_c} \in L^2(\Gamma_c)$ and a trace estimate there holds
  \begin{align*}
    \norm{\bu - \bu_{hr}}_{\bV}^2 +\norm{p-p_{ks}}_{V}^2 & \lesssim \| \bu - \bv_{hr}\|_{\bH^1(\Omega)}^2 + \|p-q_{ks}\|_{H^1(\Omega)}^2 \\
    & \quad +  \| \bsigma_\bn(\bu,p) \|_{L^2(\Gamma_c)} \| \gamma(\bv_{hr}-\bu + \bv - \bu_{hr})  \|_{L^2(\Gamma_c)} \\
    & \lesssim \| \bu - \bv_{hr}\|_{\bH^1(\Omega)}^2 + \|p-q_{ks}\|_{H^1(\Omega)}^2  \\
    & \quad +  \| \bu - \bv_{hr}\|_{\bL^2(\Omega)}^{1/2} \| \bu - \bv_{hr}\|_{\bH^1(\Omega)}^{1/2} \\
    & \qquad +  \| \gamma(\bv - \bu_{hr}) \|_{L^2(\Gamma_c)}.
\end{align*}
Lemma~\ref{lem:conver_rate_non_conformity}, Corollary~\ref{cor:stability_fe_sol}, $q_{ks} = \mathcal{I}_{ks}p$, \eqref{eq:interpol_error_est_I}, $\bv_{hr} = \mathcal{J}_{hr}\bu$ and \eqref{eq:interpol_error_est_J} imply
 \begin{align*}
    \norm{\bu - \bu_{hr}}_{\bV}^2 +\norm{p-p_{ks}}_{V}^2 & \lesssim \| \bu - \bv_{hr}\|_{\bH^1(\Omega)}^2 + \|p-q_{ks}\|_{H^1(\Omega)}^2  \\
    & \quad +  \| \bu - \bv_{hr}\|_{\bL^2(\Omega)}^{1/2} \| \bu - \bv_{hr}\|_{\bH^1(\Omega)}^{1/2} \\
    & \qquad +  \| \gamma(\bv - \bu_{hr}) \|_{L^2(\Gamma_c)} \\
    & \lesssim \frac{h^{2\min\{r,\beta_u-1\}}}{r^{2(\beta_u-1)}} |\bu|_{\bH^{\beta_u}(\Omega)}^2 +   \frac{k^{2\min\{s,\beta_p-1\}}}{s^{2(\beta_p-1)}} |p|_{H^{\beta_p}(\Omega)}^2  \\
    & \quad +  \frac{h^{\min\{r+1/2,\beta_u-1/2\}}}{r^{\beta_u-1/2}} |\bu|_{\bH^{\beta_u}(\Omega)} \\
    & \qquad + \left(\frac{h}{r}\right)^{1/2}\left(\|\bu_{hr} \|_{\bH^1(\Omega)} + \| g  \|_{H^{\frac{d-1}{2} + \frac{1}{4}+\epsilon}(\Gamma_c)} \right) \\
    & \lesssim \left(\frac{h}{r}\right)^{1/2} +   \frac{k^{2\min\{s,\beta_p-1\}}}{s^{2(\beta_p-1)}} .
\end{align*}
In the last line we eliminate dominated convergence rates knowing that $\beta_u > (d+1)/2 \geq 3/2$.
\end{proof}

We emphasize that the convergence rates in $k$ and $s$ are optimal and are reduced in $h$ and $r$ as can be expected for contact problems \cite{gwinner2013hp}.
In case of the lowest order $h$-version, i.e.~$r=1$, we recover optimal guaranteed convergence rates if everything is  sufficiently smooth.
\begin{corollary} \label{cor:lowest_h_conver_rates}
Let $r=1$, $\bu \in \bH^{\beta_u}(\Omega)$ with $\beta_u>(d+1)/2$, $p \in H^{\beta_p}(\Omega)$ with $\beta_p \geq 1$, $\bsigma_\bn(\bu,p)|_{\Gamma_c} \in L^2(\Gamma_c)$ and $g \in H^{\beta_g}(\Gamma_c)$ with $\beta_g > (d-1)/2 $. Then there holds
\begin{align*}
    \norm{\bu - \bu_{hr}}_{\bV}^2 +\norm{p-p_{ks}}_{V}^2 \lesssim  h^{\min\{2,\beta_u-1/2,\beta_g\}} + \frac{k^{2\min\{s,\beta_p-1\}}}{s^{2(\beta_p-1)}} .
\end{align*}
\end{corollary}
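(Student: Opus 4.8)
The plan is to invoke the general a priori error estimate of Theorem~\ref{thm:StrangFalk}, exactly as in the proof of Theorem~\ref{thm:convergence_rates}, but to replace the single nonconformity bound of Lemma~\ref{lem:conver_rate_non_conformity} by the sharper $r=1$ bound from Corollary~\ref{cor:conver_rate_non_conformity_linear}. So the first step is to write
\begin{align*}
    \norm{\bu - \bu_{hr}}_{\bV}^2 +\norm{p-p_{ks}}_{V}^2 & \lesssim \| \bu - \bv_{hr}\|_{\bH^1(\Omega)}^2 + \|p-q_{ks}\|_{H^1(\Omega)}^2 \\
    & \quad + \| \bsigma_\bn(\bu,p) \|_{L^2(\Gamma_c)} \| \gamma(\bv_{hr}-\bu + \bv - \bu_{hr}) \|_{L^2(\Gamma_c)},
\end{align*}
using Lemma~\ref{lem:norm_equi}, the Cauchy--Schwarz inequality and $\bsigma_\bn(\bu,p)|_{\Gamma_c}\in L^2(\Gamma_c)$, valid for all $\bv\in\bK$, $\bv_{hr}\in\bK_{hr}$, $q_{ks}\in V_{ks}$.

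Next I would make the concrete choices $\bv_{hr} = \mathcal{J}_{hr}\bu$ (legitimate since $\mathcal{J}_{hr}:\bH^{\beta_u}(\Omega)\cap\bK\to\bK_{hr}$ and $\beta_u>(d+1)/2$ guarantees the interpolant is well defined), $q_{ks}=\mathcal{I}_{ks}p$, and $\bv\in\bK$ the specific function constructed in the proof of Corollary~\ref{cor:conver_rate_non_conformity_linear} (which reuses the construction of Lemma~\ref{lem:conver_rate_non_conformity}). For the $\gamma$-term I would split $\gamma(\bv_{hr}-\bu+\bv-\bu_{hr})$ into $\gamma(\bv_{hr}-\bu)$ and $\gamma(\bv-\bu_{hr})$. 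The first is controlled by the trace inequality together with the $L^2$-superconvergence part of \eqref{eq:interpol_error_est_J}: $\|\gamma(\bv_{hr}-\bu)\|_{L^2(\Gamma_c)}\lesssim \|\bu-\bv_{hr}\|_{\bL^2(\Omega)}^{1/2}\|\bu-\bv_{hr}\|_{\bH^1(\Omega)}^{1/2}$, which with $r=1$ is of order $h^{\min\{2,\beta_u-1/2\}}$ after absorbing the $\|\bsigma_\bn\|_{L^2}$ factor. The second is exactly the quantity bounded by Corollary~\ref{cor:conver_rate_non_conformity_linear}, which (after using Corollary~\ref{cor:stability_fe_sol} to bound $\|\bu_{hr}\|$ is not even needed here, since the $r=1$ bound only involves $\|g\|_{H^{\beta_g}(\Gamma_c)}$) gives order $h^{\min\{2,\beta_g\}}$.

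Finally I would assemble the four contributions: $\|\bu-\mathcal{J}_{hr}\bu\|_{\bH^1(\Omega)}^2 \lesssim h^{2\min\{1,\beta_u-1\}}$ by \eqref{eq:interpol_error_est_J} with $r=1$, which is $h^2$ since $\beta_u>(d+1)/2\ge 3/2$ so $\min\{1,\beta_u-1\}=1$; $\|p-\mathcal{I}_{ks}p\|_{H^1(\Omega)}^2\lesssim k^{2\min\{s,\beta_p-1\}}/s^{2(\beta_p-1)}$ by \eqref{eq:interpol_error_est_I}; the $L^2$-superconvergence term of order $h^{\min\{2,\beta_u-1/2\}}$; and the nonconformity term of order $h^{\min\{2,\beta_g\}}$. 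Taking the minimum of the $h$-exponents yields $h^{\min\{2,\beta_u-1/2,\beta_g\}}$, and since $2 \ge \min\{2,\beta_u-1/2,\beta_g\}$ the $h^2$ term from the $\bH^1$ interpolation error is dominated and may be absorbed. This gives the claimed estimate. I do not expect a real obstacle here: the statement is a near-immediate specialization of Theorem~\ref{thm:convergence_rates} once Corollary~\ref{cor:conver_rate_non_conformity_linear} replaces Lemma~\ref{lem:conver_rate_non_conformity}; the only point requiring a moment's care is checking that the exponent bookkeeping — in particular keeping $\beta_u-1/2$ (from the $L^2$-superconvergence trace term) rather than $\beta_u-1$, and that $\min\{1,\beta_u-1\}=1$ — is done correctly so that nothing of lower order than $h^{\min\{2,\beta_u-1/2,\beta_g\}}$ is retained.
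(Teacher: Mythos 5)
Your overall strategy coincides with the paper's: invoke Theorem~\ref{thm:StrangFalk}, use Cauchy--Schwarz with $\bsigma_\bn(\bu,p)|_{\Gamma_c}\in L^2(\Gamma_c)$, take $\bv_{hr}=\mathcal{J}_{hr}\bu$, $q_{ks}=\mathcal{I}_{ks}p$, and replace Lemma~\ref{lem:conver_rate_non_conformity} by Corollary~\ref{cor:conver_rate_non_conformity_linear} for the nonconformity term. However, there is a genuine gap in your treatment of $\|\gamma(\bu-\bv_{hr})\|_{L^2(\Gamma_c)}$. The multiplicative trace inequality combined with the superconvergence part of \eqref{eq:interpol_error_est_J} gives
\begin{align*}
\|\bu-\bv_{hr}\|_{\bL^2(\Omega)}^{1/2}\,\|\bu-\bv_{hr}\|_{\bH^1(\Omega)}^{1/2}
\lesssim \left(h^{1+\min\{1,\beta_u-1\}}\right)^{1/2}\left(h^{\min\{1,\beta_u-1\}}\right)^{1/2}
= h^{\,1/2+\min\{1,\beta_u-1\}} = h^{\min\{3/2,\,\beta_u-1/2\}},
\end{align*}
not $h^{\min\{2,\beta_u-1/2\}}$ as you claim: the $H^1$ factor caps the rate at $3/2$ no matter how smooth $\bu$ is. Whenever $\min\{2,\beta_u-1/2,\beta_g\}>3/2$ (e.g.\ $\beta_u=3$, $\beta_g=2$), your argument therefore only yields $h^{3/2}$ and falls short of the stated $h^{\min\{2,\beta_u-1/2,\beta_g\}}$. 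This is exactly the term where the $r=1$ corollary is supposed to improve on Theorem~\ref{thm:convergence_rates}, whose proof does use your multiplicative-trace route and correspondingly obtains only the exponent $\min\{r+1/2,\beta_u-1/2\}$ there.

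The paper closes this gap differently: since $\mathcal{J}_{hr}$ is the tensor-product Gauss--Lobatto nodal interpolant, its trace commutes with the boundary interpolant, $\gamma(\mathcal{J}_{hr}\bu)=\mathcal{Q}_{hr}(\gamma(\bu))$, so one can bound $\|\gamma(\bu-\bv_{hr})\|_{L^2(\Gamma_c)}=\|\gamma(\bu)-\mathcal{Q}_{hr}(\gamma(\bu))\|_{L^2(\Gamma_c)}$ directly by the boundary estimate \eqref{eq:interpol_error_est_Q_L2} applied to $\gamma(\bu)\in H^{\beta_u-1/2}(\Gamma_c)$, which for $r=1$ gives the exponent $\min\{r+1,\beta_u-1/2\}=\min\{2,\beta_u-1/2\}$. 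Everything else in your proposal (the $H^1$ interpolation term, the pressure term, and the use of Corollary~\ref{cor:conver_rate_non_conformity_linear} for $\|\gamma(\bv-\bu_{hr})\|_{L^2(\Gamma_c)}$, including your correct observation that the stability bound on $\|\bu_{hr}\|$ is not needed there) matches the paper and is fine.
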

\begin{proof}
  We argue similarly as for Theorem~\ref{thm:convergence_rates} but use Corollary~\ref{cor:conver_rate_non_conformity_linear} instead of Lemma~\ref{lem:conver_rate_non_conformity} and $\gamma(\mathcal{J}_{hr}(\bu)) = \mathcal{Q}_{hr}(\gamma(\bu))$ followed by a trace estimate to obtain
 \begin{align*}
    \norm{\bu - \bu_{hr}}_{\bV}^2 +\norm{p-p_{ks}}_{V}^2 & \lesssim \| \bu - \bv_{hr}\|_{\bH^1(\Omega)}^2 + \|p-q_{ks}\|_{H^1(\Omega)}^2  \\
    & \quad +  \| \gamma(\bu - \bv_{hr})\|_{L^2(\Gamma_c)}  +  \| \gamma(\bv - \bu_{hr}) \|_{L^2(\Gamma_c)} \\
    & \lesssim h^{2\min\{1,\beta_u-1\}} |\bu|_{\bH^{\beta_u}(\Omega)}^2 +   \frac{k^{2\min\{s,\beta_p-1\}}}{s^{2(\beta_p-1)}} |p|_{H^{\beta_p}(\Omega)}^2  \\
    & \quad +  h^{\min\{2,\beta_u-1/2\}} |\gamma(\bu)|_{H^{\beta_u-1/2}(\Gamma_c)} \\
    & \qquad + h^{\min\{2,\beta_g\}} \| g  \|_{H^{\beta_g}(\Gamma_c)}  \\
    & \lesssim h^{\min\{2,\beta_u-1/2,\beta_g\}} + \frac{k^{2\min\{s,\beta_p-1\}}}{s^{2(\beta_p-1)}} .
\end{align*}
\end{proof}

The key ingredient for the proof of convergence rates is the existence of an interpolation operator $\mathcal{J}$ mapping sufficiently smooth functions in $\bK$ onto $\bK_{hr}$ with the additional property that $\gamma(\mathcal{J}\bu) = \mathcal{Q}(\gamma(\bu)) $ for a well analyzed interpolation operator $\mathcal{Q}$. On quadrilaterals and hexahedrons we may exploit the tensor structure to define $\mathcal{J}$ and $\mathcal{Q}$. On e.g.~triangles a general $hp$-result is still open to the best of our knowledge and we would choose $\mathcal{J}$ to be the nodal interpolation in the Fekete points and use the in \cite{taylor2000algorithm} numerically verified conjecture that the Fekete points restricted to the edges of a reference triangle correspond to the one-dimensional Gauss-Lobatto points. Hence, we could then choose $\mathcal{Q}$ as done above.
 
\section{Numerical Experiments}
\label{sec:numerics}
For the numerical experiments we {choose} $\Omega =(0,1)^2$, $\Gamma_d = \Gamma_p = [0,1]\times \{1\}$, $\Gamma_c = [0,1] \times \{0\}$, $\Gamma_t = \partial \Omega \setminus ( \Gamma_d \cup \Gamma_c)$ and $\Gamma_f = \partial \Omega \setminus \Gamma_p$. The data are $\bfe = (0,-1)^\top$, $f_f = -1$ and $g = 3(1-\cos(x_1-0.5))$. The material parameters are $\iota = \tau = \alpha = 1$ and $\bkappa = \bI$. As $\Gamma_d = \Gamma_p$ we use the same FE-mesh and polynomial degree for $\bu$ and $p$, i.e.~$\bV_{hr} = [V_{ks}]^2$ or short $k=h$ and $s=r$. The maximum number of hanging nodes per edge is one and the polynomial degree of adjacent elements does not differ by more than one. 
{In case of adaptivity we use the sum of the local error indicators
\begin{align*}
     \eta_{u,T}^2 &= \frac{h_T^2}{r_T^2} \left\|
    \operatorname{div}\btheta(\bu_{hr}) - \alpha \nabla p_{hr} + \bfe \right\|_{L^2(T)}^2  + \sum_{e \in \partial T \cap \Omega} \frac{h_e}{2r_e} \left\| \llbracket \btheta(\bu_{hr})\cdot \bn \rrbracket \right\|_{L^2(e)}^2 \\
    & \qquad + \sum_{e \in \partial T \cap \Gamma_t} \frac{h_e}{r_e} \left\| \btheta(\bu_{hr})\cdot \bn- \alpha  p_{hr}\bn \right\|_{L^2(e)}^2 \\
   & \quad \qquad + \sum_{e \in \partial T \cap \Gamma_c} \frac{h_e}{r_e} \left\|  \btheta(\bu_{hr})\cdot \bn- \left(\alpha  p_{hr} - \lambda_{hr} \right)\bn  \right\|_{L^2(e)}^2,\\
     \eta_{p,T}^2 &= \frac{h_T^2}{r_T^2} \left\|
    \operatorname{div}\left(\bkappa \nabla p_{hr}\right) -\frac{\alpha^{2}}{\iota} p_{hr}-  \alpha \operatorname{div} \bu_{hr} - f_f  \right\|_{L^2(T)}^2 \\
    & \quad + \sum_{e \in \partial T \cap \Omega} \frac{h_e}{2r_e} \left\| \llbracket \bkappa \nabla p_{hr} \cdot \bn \rrbracket \right\|_{L^2(e)}^2 + \sum_{e \in \partial T \cap \Gamma_f} \frac{h_e}{r_e} \left\| \bkappa \nabla p_{hr} \cdot \bn \right\|_{L^2(e)}^2,
\end{align*}
where $\llbracket \cdot \rrbracket$ is the usual jump, and of
    \begin{align*}
      \left\| \frac{h^{1/2}}{r^{-1/2}} (\lambda_{hr}-\mu) \right\|^2_{L^2(\Gamma_c)} + \left\| \frac{r^{1/2}}{h^{-1/2}} \zeta \right\|^2_{L^2(\Gamma_c)} + \frac{90}{469} \left( \langle\lambda_{hr},\zeta\rangle_{\Gamma_c} + \langle \mu,g-\bu_{hr} \cdot \bn \rangle_{\Gamma_c} \right).
    \end{align*}
    Here, the negative discrete contact stress $\lambda_{hr} \in V_{hr}|_{\Gamma_c} = \gamma(\bV_{hr}) \subset \widetilde{H}^{-1/2}(\Gamma_c)$ is reconstructed by solving
\begin{align} \label{eq:lambda_reconstruction}
  \langle \lambda_{hr}, \bv_{hr}\cdot n \rangle_{\Gamma_c} = -a(\bu_{hr},\bv_{hr} )+b(\bv_{hr} ,p_{ks}) + \langle \bfe ,\bv_{hr} \rangle \quad \forall \  \bv_{hr} \in \bV_{hr},
\end{align}
and the cut-off functions are chosen according to 
    \begin{align*}
       \mu = \sup\left\{0, \lambda_{hr} - \frac{45}{469} \frac{r}{h}(g-\widetilde{\bu}_\bn) \right\}, \quad 
       \zeta = \sup\left\{\bu_{hr} \cdot \bn - g , -\frac{45}{469} \frac{h}{r}\lambda_{hr}\right\}.
    \end{align*}
 The reliability and the efficiency of this error indicator will be analyzed in a further contribution.   
}

 As the opening angles in the corners of $\Omega$ are $\pi/2$ and we have a transition from Dirichlet to Neumann boundary condition in the top left and {top} right corners, we can only expect $\bu \in \bH^2(\Omega)$, provided that the data $(\bfe,\nabla p,{\bsigma_\bn}) \in \bL^2(\Omega) \times \bL^2(\Omega) \times H^{1/2}(\Gamma_c)$ or better. Obviously, $(\bfe,\nabla p) \in \bL^2(\Omega) \times \bL^2(\Omega)$ and from the picture of the (very fine discrete) solution, see Figure~\ref{fig:solution}, we can expect ${\bsigma_\bn} \in H^{3/2-\epsilon}(\Gamma_c)$ for any $\epsilon>0$. For the same reason we can expect $p \in H^2(\Omega)$ provided that the data $(f_f,\div \bu) \in L^2(\Omega) \times L^2(\Omega)$ or better, which is obviously the case here. Moreover, the Dirichlet to Neumann transitions seem to be stronger singularities than the contact to non-contact transition ones.

 \begin{figure}[tbp]
  \centering 
  \subfloat[Deformed body (color stands for value of $p$)\\ and obstacle (red curve)]{
	\includegraphics[trim = 15mm 5mm 15mm 5mm, clip,height=50mm, keepaspectratio]{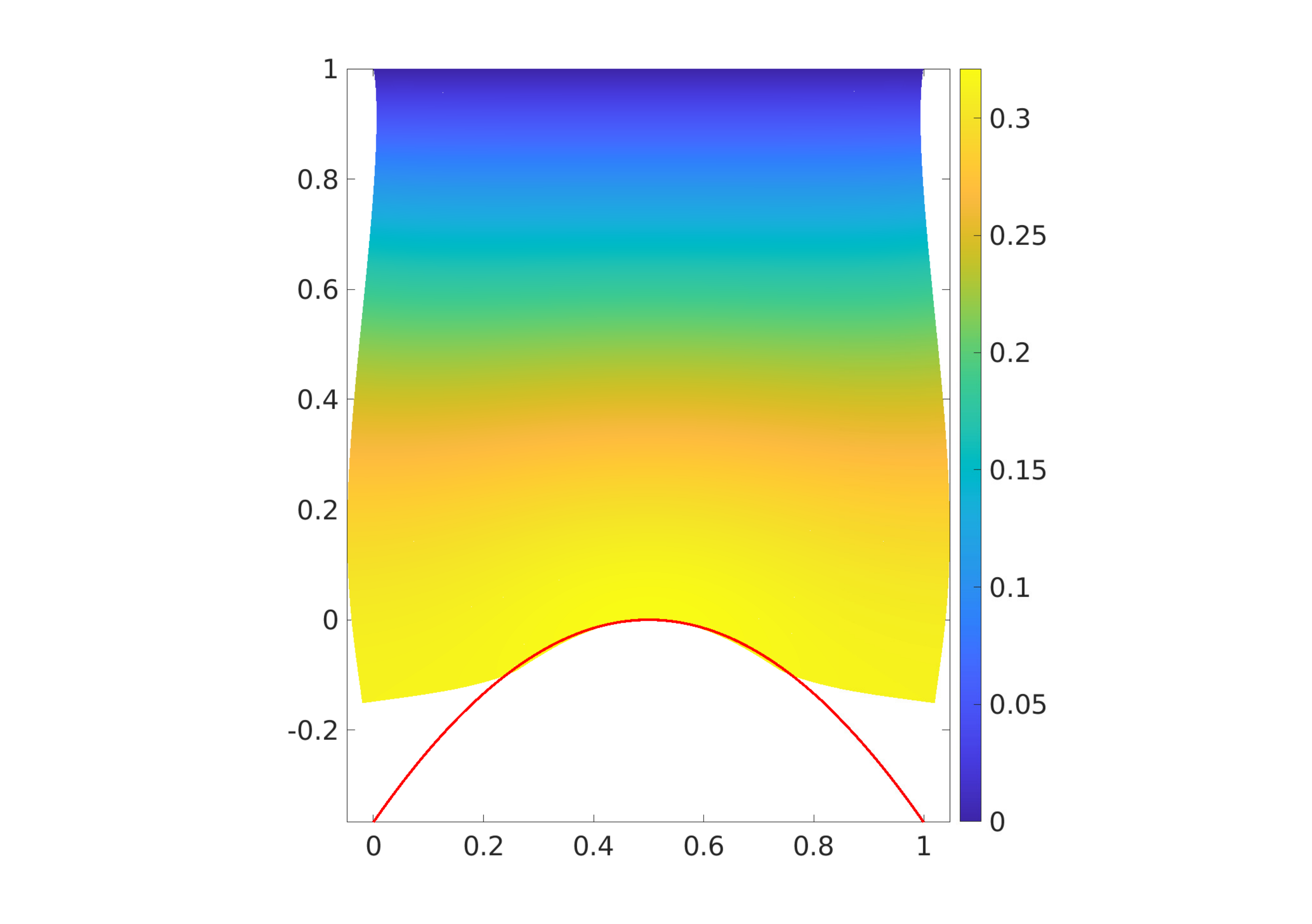}  }
 \subfloat[Negative contact pressure{:  $-\bsigma_\bn(\bu,p)|_{\Gamma_c}$}]{
	\includegraphics[trim = 4mm 5mm 9mm 5mm, clip,height=40mm, keepaspectratio]{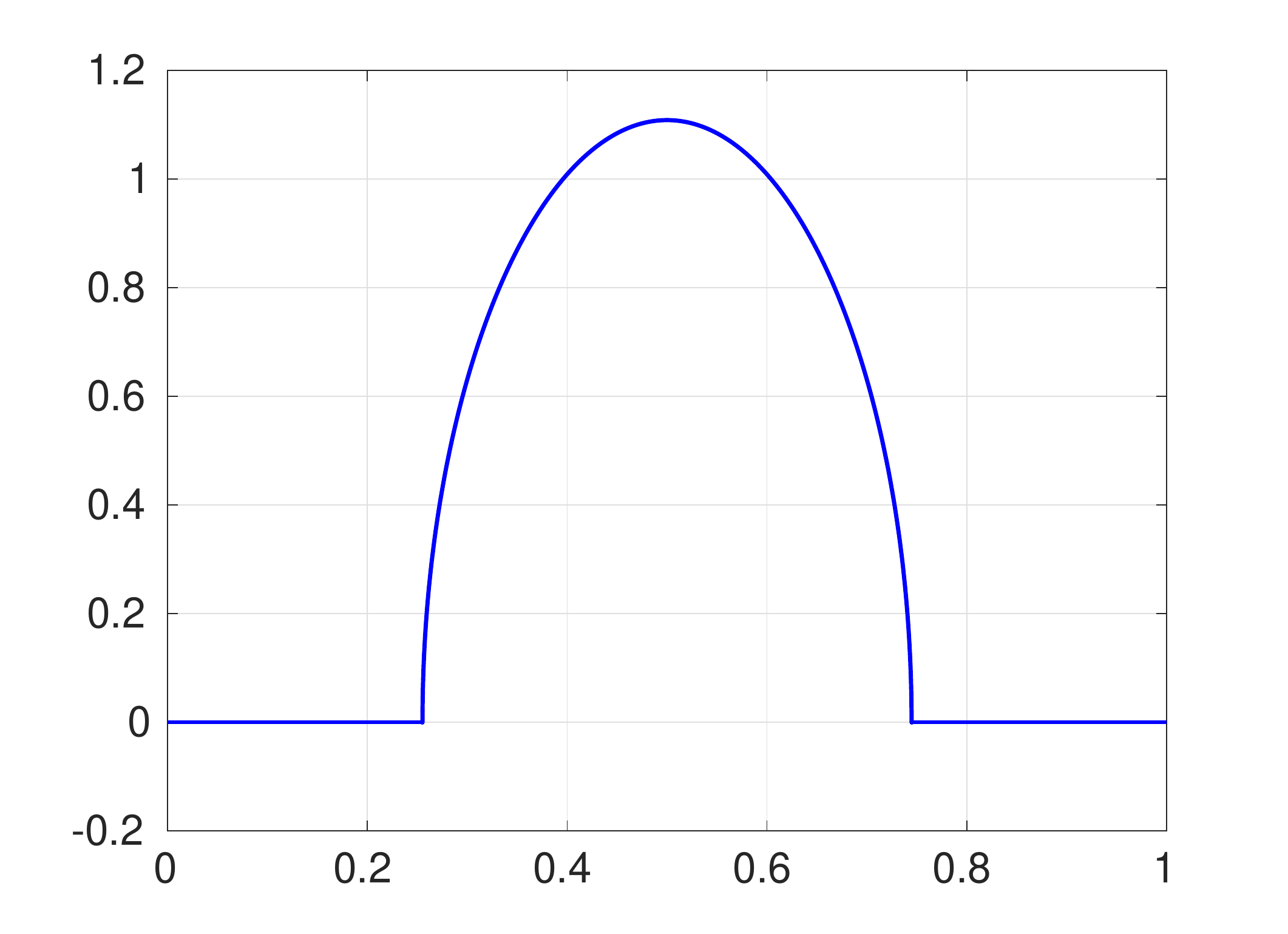}  }
  \caption{Visualization of the solution}
  \label{fig:solution}
\end{figure}

{As no exact solution is known we approximate it by an overrefined Galerkin solution which is obtained by quartering every element and increasing the polynomial degree by one compared to the finest solution of a given particular refinement scheme to compute the error}. The results for different refinement techniques, uniform $h$-version with $r=1,2,3$, {uniform $r$-version with $h = 0.25$}, $h$-adaptive schemes with $r=2,3$ and a $hr$-adaptive scheme are plotted in Figure~\ref{fig:error}. For the adaptive schemes we use { isotropic refinements plus} Dörfer-marking with bulk parameter $1/2$, and decided between $h$ and $r$ refinement based on the decay rate of the error estimator in $r$ to estimate the local Sobolev regularity, see e.g.~\cite{banz2018higher,banz2019higher,banz2020posteriori}. We observe optimal experimental order of convergence (eoc) for the uniform $h$-version with $r=1$, namely 0.5 w.r.t.~the total degrees of freedom $N$ as { predicted by Corollary~\ref{cor:lowest_h_conver_rates}. Contrary to Theorem~\ref{thm:convergence_rates} but common for contact problems we do not observe reduced convergence rates compared to the best approximation of $(\bu,p)$ within higher order schemes}. Increasing $r$ to two or three does not increase the eoc but only improves the error constant indicating $(\bu,p) \in \bH^2(\Omega) \times H^2(\Omega)$ only. {The eoc of the uniform $r$-version increases with $r$ and seems to tend towards one, which is in agreement with the expectation $\bsigma_\bn \in H^{3/2-\epsilon}(\Gamma_c)$ and that the $H^2$ limiting singularities lie in the corners of the meshes.} Pursuing an adaptive strategy we recover optimal order of convergence, that is 1, 1.5 for $h$-adaptivity with $r=2,3$, respectively, and exponential in case of $hr$-adaptivity. The latter can be observed from the straight orange line in the right diagram of Figure~\ref{fig:error}.\\

{
In two dimensions all these singularities are point singularities and, thus, isotropic refinements can lead to exponential convergence. In three dimensions, however, the singularity coinciding with the free boundary typically forms a one dimensional manifold which cannot be adequately resolved by isotropic refinements, and thus, we would expect an algebraic convergence rate of the $hr$-method similarly as for two dimensional obstacle problems \cite{banz2014posteriori,banz2015biorthogonal,banz2019hybridization}.
}

\begin{figure}[tb]
    \centering
    \subfloat{
	\begin{tikzpicture}[scale=0.76]
            \begin{loglogaxis}[
                width=0.65\textwidth,
                line width=1.0pt,
                mark size=3pt,
                xmin=1,xmax=1e7,
                ymin=1e-10,ymax=1e0,
                legend style={font=\large,at={(0,0)},anchor=south west},
                xlabel=$N$
                ]
                \addplot+[mark=+, color=blue]  table[x index=0,y index=2] {h1.txt};
                \addplot+[mark=square, color=red]   table[x index=0,y index=2] {h2.txt};
                \addplot+[mark=triangle, color=teal] table[x index=0,y index=2] {h3.txt};
                \addplot+[mark=10-pointed star, color=black]    table[x index=0,y index=2] {p4.txt};

                \addplot+[mark=x, color=purple]   table[x index=0,y index=2] {a2.txt};
                \addplot+[mark=o, color=darkbrown,solid]    table[x index=0,y index=2] {a3.txt};
                \addplot+[mark=diamond, color=orange,solid] table[x index=0,y index=2] {hp.txt};
 
                \legend{{$h$-unif., $r=1$}, {$h$-unif., $r=2$}, {$h$-unif., $r=3$},{$r$-unif., $h=\frac{1}{4}$}, {$h$-adap., $r=2$}, {$h$-adap., $r=3$}, {$hr$-adap.}}

			\draw (1e5,7e-3) -- (1e6,7e-3);
			\draw (1e6,7e-3) -- (1e6,7e-3*0.316227766016838);
			\draw (1e5,7e-3) -- (1e6,7e-3*0.316227766016838);
			\node at (1.5e6,4e-3) {$\frac{1}{2}$};
			
			\draw (1e5,1e-4) -- (1e6,1e-4);
			\draw (1e6,1e-4) -- (1e6,1e-4*0.1);
			\draw (1e5,1e-4) -- (1e6,1e-4*0.1);
			\node at (1.5e6,4e-5) {$1$};
			
			\draw (1e5,1e-6*0.031622776601684) -- (1e6,1e-6*0.031622776601684);
 			\draw (1e5,1e-6) -- (1e6,1e-6*0.031622776601684);
 			\draw (1e5,1e-6) -- (1e5,1e-6*0.031622776601684);
  			\node at (7e4,2e-7) {$\frac{3}{2}$};
  			
            \end{loglogaxis}
        \end{tikzpicture}
        }
	\subfloat{
	\begin{tikzpicture}[scale=0.76]
            \begin{semilogyaxis}[
                width=0.65\textwidth,
                line width=1.0pt,
                mark size=3pt,
                xmin=0,xmax=200,
                ymin=1e-10,ymax=1e0,
                legend style={font=\large,at={(0,0)},anchor=south west},
                xlabel=$N^{1/3}$
                ]
                \addplot+[mark=+, color=blue]  table[x index=1,y index=2] {h1.txt};
                \addplot+[mark=square, color=red]   table[x index=1,y index=2] {h2.txt};
                \addplot+[mark=triangle, color=teal] table[x index=1,y index=2] {h3.txt};
                 \addplot+[mark=10-pointed star, color=black]    table[x index=1,y index=2] {p4.txt};
                 
                \addplot+[mark=x, color=purple]   table[x index=1,y index=2] {a2.txt};
                \addplot+[mark=o, color=darkbrown]    table[x index=1,y index=2] {a3.txt};
                \addplot+[mark=diamond, color=orange,solid] table[x index=1,y index=2] {hp.txt};


            \end{semilogyaxis}
        \end{tikzpicture}
        }
  \caption{Error vs.~degrees of freedom $N$}
  \label{fig:error}        
\end{figure}
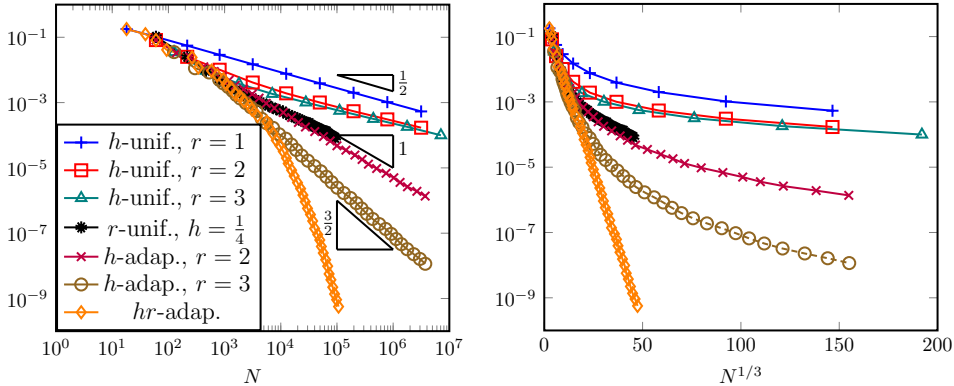



\bibliographystyle{siamplain}
\bibliography{references}
\end{document}